 \theoremstyle{plain}
 \newtheorem{theo}{Theorem}[section]
 \newtheorem{lem}{Lemma}[section]
 \newtheorem{cor}{Corollary}[section]
\newtheorem{defi}{Definition}[section]
\newtheorem{ex}{\normalfont\it Example}[section]
\newfont{\bbold}{bbold10 scaled\magstep1}
\newcommand{\Id}{1\kern-0.25em{\rm l}}
\newcommand{\D}{\displaystyle}
\newcommand{\real}{\mathbb{R}}
\newcommand{\mlt}{\circ}
\newcommand{\mltl}{\ast}
\providecommand{\E}{\operatorname{E}}
\providecommand{\N}{\mathbb{N}}
\providecommand{\C}{\mathbb{C}}
\newcommand{\frh}{\mathfrak{h}}
\newcommand{\frr}{\mathfrak{r}}
\newcommand{\frd}{\mathfrak{d}}
\newcommand{\frg}{\mathfrak{g}}
\newcommand{\Tau}{\mathcal{T}}
\newcommand{\arsinh}{\operatorname{arsinh}}
\newcommand{\bO}{\ensuremath{\mathcal{O}}}
\newcommand{\Ih}{I^h}
\newcommand{\I}[1]{I_{(#1)}}
\newcommand{\g}{g_{1}}
\newcommand{\f}{g_{0}}
\newcommand{\dtree}[1]{\pstree{#1}}
\newcommand{\tn}{\TC*}
\newcommand{\tnr}[1]{\TC*~[tnpos=r]{\ensuremath{\scriptstyle #1}}}
\newcommand{\tnl}[1]{\TC*~[tnpos=l]{\ensuremath{\scriptstyle #1}}}
\newcommand{\node}[1]{\bullet_{#1}}
\newlength{\figwidth}
\newcommand{\Farbbild}[1]{#1}
\title{Stochastic B--series analysis of iterated Taylor methods}
\author{Kristian Debrabant}
\address[Kristian Debrabant]{Technische Universit\"{a}t Darmstadt, Fachbereich Mathematik, Dolivo\-stra{\ss}e 15,
D-64293 Darmstadt, Germany}
\email{debrabant@mathematik.tu-darmstadt.de}
\author{Anne Kv{\ae}rn{\o}}
\address[Anne Kv{\ae}rn{\o}]{
Department of Mathematical Sciences, Norwegian University of Science and Technology, N-7491 Trondheim, Norway}
\email{anne.kvarno@math.ntnu.no}
\keywords{Stochastic Taylor method, stochastic differential equation, iterative scheme, order, Newton's method, weak approximation, strong approximation, growth functions, stochastic B--series}
\subjclass[2000]{65C30, 60H35, 65C20, 68U20}
\begin{document}
\maketitle
\begin{abstract}
For stochastic implicit Taylor methods that use an iterative scheme to compute their numerical solution, stochastic B--series and corresponding growth functions are constructed. From these, convergence results based on the order of the underlying Taylor method, the choice of the iteration method, the predictor and the number of iterations, for It\^{o} and Stratonovich SDEs, and for weak as well as strong convergence are derived. As special case, also the application of Taylor methods to ODEs is considered. The theory is supported by numerical experiments.
\end{abstract}
\psset{levelsep=-12pt,nodesep=0pt,treesep=8pt,radius=2pt,tnsep=1pt}
\renewcommand{\tn}{\TC*}
\renewcommand{\tnr}[1]{\TC*~[tnpos=r]{\ensuremath{\scriptstyle #1}}}
\renewcommand{\tnl}[1]{\TC*~[tnpos=l]{\ensuremath{\scriptstyle #1}}}
\setlength{\figwidth}{0.485 \textwidth}
\section{Introduction}
Besides stochastic Runge--Kutta methods, one important class of schemes to approximate the solution of stochastic differential equations (SDEs) are stochastic Taylor methods. As in the deterministic setting \cite{barrio05pot}, they are especially suitable for problems with not too high dimension, and here especially in the case of strong approximation, because weak approximation of low-dimensional problems can often be done more efficiently by numerically solving the corresponding deterministic PDE problem obtained by applying the Feynman-Kac formula.
For solving stiff SDEs, implicit methods have to be considered, as is illustrated in the following two examples.
\begin{ex}[see \cite{higham00msa}] Consider the linear It\^{o}-SDE
\begin{equation}\label{eq:gbm}
dX(t)=\mu X(t)~dt+\sigma X(t)~dW(t),\quad X(0)=x_0,
\end{equation}
with $\mu,\sigma\in\C$. We assume that the exact solution is mean-square stable, i.\,e.\
\[\lim_{t\to\infty}\E(|X(t)|^2)=0,\]
which is the case if and only if $2\Re\mu+|\sigma|^2<0.$ To achieve that also the numerical
approximation $Y_{n}$ obtained with the (explicit) Euler-Maruyama scheme with step size $h$ is mean-square stable, i.\,e.\
$\lim_{n\to\infty}\E(|Y_n|^2)=0$, we have to restrict the step size according to $h<h_0:=-\frac{2\Re\mu+|\sigma|^2}{|\mu|^2}$, whereas
for $h>h_0$ the numerical approximations explode, $\lim_{n\to\infty}\E(|Y_n|^2)=\infty$. In contrast to this, the semi-implicit Euler
scheme is mean-square stable without any step size restriction. For a numerical affirmation, see Figure \ref{fig:implmotiv1}.
\end{ex}
\begin{figure}[tbp]
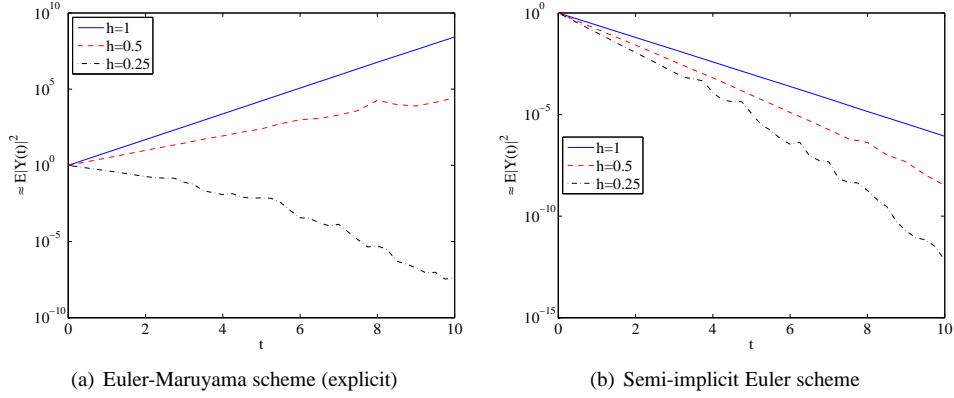
 \subfigure[Euler-Maruyama scheme (explicit)]{\includegraphics[width=\figwidth]{\Farbbild{ExplEuler}}} \hspace*{\fill}
\subfigure[Semi-implicit Euler scheme]{\includegraphics[width=\figwidth]{\Farbbild{ImplEuler}}}
 \caption{\label{fig:implmotiv1}Approximation results for the linear test equation \eqref{eq:gbm} with $\mu =-3$, $\sigma =\sqrt3$, and $x_0=1$ by
Euler-Maruyama (explicit) and semi-implicit Euler scheme. Here, $\E(|Y(t)|^2)$ is approximated as mean over $10^6$ simulations. The
explicit scheme is only stable for appropriate step sizes, the semi-implicit scheme is stable for all step sizes.}
 \end{figure}
\begin{ex}
Consider the following stochastic Van der Pol equation,
\begin{align*}
dX_1(t)&=X_2(t)~dt,\\
dX_2(t)&=\left(\mu(1-X_1(t)^2)X_2(t)-X_1(t)\right)~dt+\theta(1-X_1(t)^2)X_2(t)~dW(t),\\
X_1(0)&=x_{0,1}, \quad X_2(0)=x_{0,2}.
\end{align*}
Application of the explicit Milstein scheme (see Example \ref{ex:SIMilstein} with $\alpha=0$ and $\beta=0$) with step-size $h=0.05$ to approximate a solution path leads to an explosion of the approximation, see Figure \ref{fig:implmotiv3}, whereas application of the semi-implicit Milstein scheme, given by substituting $g_0(Y_{n})$ by $g_0(Y_{n+1})$ ($\alpha=1$ and $\beta=0$ in Example \ref{ex:SIMilstein}), yields (for the same Brownian path) the result of Figure \ref{fig:implmotiv2}.
\end{ex}
\begin{figure}[tbp]
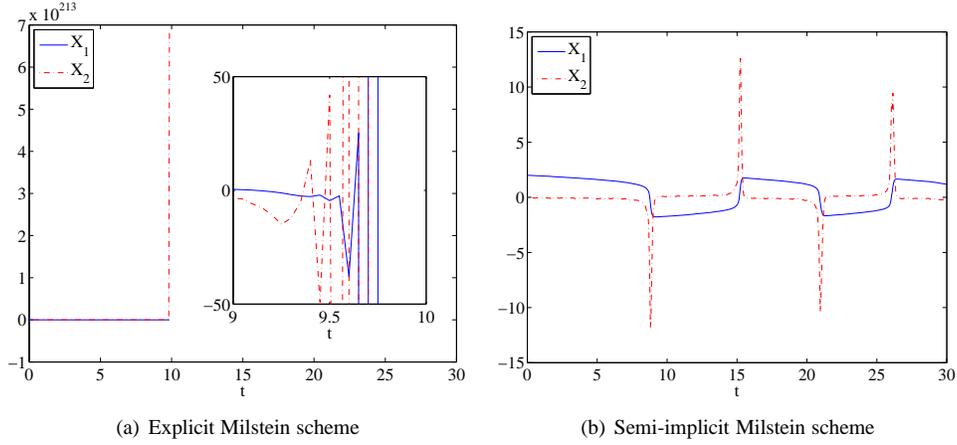

\subfigure[\label{fig:implmotiv3}Explicit Milstein scheme]{\includegraphics[width=\figwidth]{\Farbbild{ExplMilstein}}}
\hspace*{\fill}
\subfigure[\label{fig:implmotiv2}Semi-implicit Milstein scheme]{\includegraphics[width=\figwidth]{\Farbbild{ImplMilstein}}}
 \caption{Approximation of Van der Pol equation with $\mu = 10$, $\theta = 1$, $x_{0,1}=2$, and $x_{0,2}=0$ by the explicit and semi-implicit Milstein scheme (using the same Brownian path) with step-size $h=0.05$. The explicit scheme suffers from heavy stability problems and aborts.}
 \end{figure}
Implicit stochastic Taylor methods have been considered both for strong \cite{kloeden99nso,tian01itm} and weak \cite{kloeden99nso} approximation. For these methods, the approximation values are only given implicitly. However, in practice these implicit equations are solved by iterative schemes like simple iteration or Newton iteration. The ``exact numerical'' solution can be written in terms of B--series \cite{debrabantXXcos}. As we will prove in this paper, so can the iterated solution. Moreover, for each iteration scheme in question, we will define a growth function. Briefly explained, when the exact numerical and the $k$ times iterated solutions are both written in terms of B--series, then all terms of these series for which the growth function has a value not greater than $k$ coincide.
Thus the growth functions give a quite exact description of the development of the iterations. B--series and corresponding growth functions for iterated solutions have been derived for Runge--Kutta methods applied to deterministic ordinary differential equations \cite{jackson96aao}, differential algebraic equations \cite{jackson94tuo}, and SDEs \cite{debrabant08bao}. Somewhat surprisingly, the growth functions are exactly the same in all these cases, and, as we will show in this paper, this also holds for implicit Taylor methods.

The outline of the paper is as follows: First, we will give the SDE to be solved and the iterated Taylor methods used for its approximation. In Section \ref{bsa2:sec:prelim} stochastic B--series are introduced and some useful preliminary results are presented. The main results of the paper can be found in Section \ref{bsa2:sec:iter}, where the B--series of the iterated solutions are developed and the before mentioned growth functions derived.
In Section \ref{bsa2:sec:generalconvergenceresults}, these findings are interpreted in terms of the order of the overall scheme, giving concrete results on the order of the considered methods depending on the kind and number of iterations, both for SDEs and ODEs. Contrary to the results obtained for Runge--Kutta methods \cite{debrabant08bao}, the order of the iteration error is shown to be independent on whether It\^{o} or  Stratonovich SDEs, weak or strong convergence is considered.
Finally, in Section \ref{bsa2:sec:numerics} we present several numerical examples to support our theoretical findings.

Let $(\Omega,\mathcal{A},\mathcal{P})$ be a probability space. We
denote by $(X(t))_{t \in I}$ the stochastic process which is the
solution of a $d$-dimensional SDE defined by
\begin{equation}\label{bsa2:eq:SDEdiff}
dX(t)=g_0(X(t))dt+\sum_{l=1}^{m}g_{l}(X(t))\star dW_l(t),\quad X(t_0)=x_{0},
\end{equation}
with an $m$-dimensional Wiener process $(W(t))_{t \geq 0}$ and
$I=[t_0,T]$. As usual, \eqref{bsa2:eq:SDEdiff} is construed as abbreviation of
\begin{equation}\label{bsa2:SDE}
X(t)=x_{0}+\int_{t_{0}}^{t}g_0(X(s))ds+\sum_{l=1}^{m}\int_{t_{0}}^{t}g_{l}(X(s))\star
dW_l(s).
\end{equation}
The integral w.\,r.\,t.\ the Wiener process has to be
interpreted e.\,g.\ as It\^{o} integral with $\star dW_l(s)=dW_l(s)$ or
as Stratonovich integral with $\star dW_l(s)=\circ dW_l(s)$. We assume that the Bo\-rel-mea\-surable coefficients $g_l :
\mathbb{R}^d \rightarrow \mathbb{R}^{d}$ are sufficiently differentiable and chosen such that SDE \eqref{bsa2:SDE} has a unique solution.

To simplify the presentation, we define $W_0(s)=s$, so that
\eqref{bsa2:SDE} can be written as
\begin{equation}\label{bsa2:SDE_new}
X(t)=x_{0}+\sum_{l=0}^{m}\int_{t_{0}}^{t}g_{l}(X(s))\star
dW_l(s).
\end{equation}
Let a discretization ${\Ih} = \{t_0, t_1, \ldots, t_N\}$ with
$t_0 < t_1 < \ldots < t_N =T$ of the time interval $I$ with step
sizes $h_n = t_{n+1}-t_n$ for $n=0,1, \ldots, N-1$ be given.
Now, we consider the general class of stochastic Taylor methods given by
$Y_0=x_0$ and
\begin{eqnarray}\label{bsa2:eq:STM}
Y_{n+1}=B(\Phi_{ex},Y_n;h_n)+B(\Phi_{im},Y_{n+1};h_n)
\end{eqnarray}
for $n=0,1,\dots,N-1$ with $Y_n=Y(t_n)$, $t_n\in {\Ih}$,
$\Phi_{ex}(\emptyset)\equiv1$, $\Phi_{im}(\emptyset)\equiv0$.
\begin{ex}\label{ex:SIMilstein}
Consider the family of Milstein schemes applied to an It\^{o} SDE with one-dimensional noise,
\begin{align}
\label{bsa2:eq:sim}
Y_{n+1}&=Y_n+h_n\big((1-\alpha)g_0(Y_n)+\alpha g_0(Y_{n+1})\big) \\
       & +I_{(1),h_n}\big((1-\beta)g_1(Y_n)+\beta g_1(Y_{n+1})\big)
       +(I_{(1,1),h_n}-\beta I_{(1),h_n}^2)[g_1'g_1](Y_n). \nonumber
\end{align}
Here,
\begin{align*}
I_{(1),h_n}&=W(t_{n+1})-W(t_{n})=\Delta W_n,\\
I_{(1,1),h_n}&=\int_{t_n}^{t_{n+1}}W(s) dW(s)=\frac{1}{2}(\Delta W_n^2-h_n),
\end{align*}
and the parameters $\alpha, \beta \in [0,1]$ indicate the degree of implicitness. When $\alpha=\beta=0$ we have the explicit Milstein scheme, with $\alpha\not=0$, $\beta=0$ a semi-implicit scheme. In all cases, the method
\eqref{bsa2:eq:sim} can be written in the form \eqref{bsa2:eq:STM} with
\begin{align*}
   B(\phi_{ex},Y_n;h_n) &= x_0 + h_n(1-\alpha)g_0(Y_n)
   + I_{(1),h_n}(1-\beta)g_1(Y_n)
   \\&+\big(I_{(1,1),h_n}-\beta I_{(1),h_n}^2\big)\,(g_1'g_1)(Y_n),
   \\
   B(\phi_{im},Y_{n+1};h_n) &= h_n \alpha g_0(Y_{n+1})
   + I_{(1),h_n}\beta g_1(Y_{n+1}).
\end{align*}
The terms $\Phi_{ex}$ and $\Phi_{im}$ refer to the time- and method-dependent part of each term: In this case
\begin{align*}
\Phi_{ex}(\bullet_0) &= h_n(1-\alpha), & \Phi_{im}(\bullet_0)&=h_n \alpha,\\
\Phi_{ex}(\bullet_1) &= I_{(1),h_n}(1-\beta), &\Phi_{im}(\bullet_1) &= I_{(1),h_n}\beta,\\
\Phi_{ex}([\bullet_1]_1]) &= I_{(1,1),h_n}-\beta I_{(1),h_n}^2.
\end{align*}
The notation will be explained in detail in Section \ref{bsa2:sec:prelim}.
\end{ex}

What the general method \eqref{bsa2:eq:STM} concerns, application of an iterative Newton-type method yields
\begin{eqnarray}\label{bsa2:eq:SRKiter}
Y_{n+1,k+1}=B(\Phi_{ex},Y_n;h_n)+B(\Phi_{im},Y_{n+1,k};h_n)+J_k(Y_{n+1,k+1}-Y_{n+1,k})
\end{eqnarray}
with some approximation $J_k$ to the Jacobian of
$B(\Phi_{im},Y_{n+1,k};h_n)$ and a predictor $Y_{n+1,0}$. In the following we assume that \eqref{bsa2:eq:SRKiter} can be solved uniquely at least for sufficiently small $h_n$. To simplify the presentation, we assume further that all step sizes are constant, $h_n=h$.

For the approximation $J_k$ there exist several common choices.
 If we choose $J_k$ to be the exact Jacobian $\partial_2B(\Phi_{im},Y_{n+1,k};h)$, then we obtain the classical Newton iteration method for solving \eqref{bsa2:eq:STM},  with quadratic convergence.  It will be denoted in the following as full Newton iteration. If we choose instead $J_k = \partial_2B(\Phi_{im},Y_{n};h)$, then we obtain the so called modified Newton iteration method, which is only linearly convergent. Here, $J_k$ is independent of the iteration number $k$, thus its computation is much cheaper and simpler than in the full Newton iteration case. The third and simplest possibility is to choose $J_k$ equal to zero. In this case we don't even have to solve a linear system for $Y_{n+1,k+1}$. This iteration method is called simple iteration method or predictor corrector method. Its disadvantage is that for stiff systems it requires very small step sizes to converge.

For most stiff problems and problems with additive noise, the use of semi-implicit methods suffices. As will be demonstrated in Section \ref{bsa2:sec:generalconvergenceresults} these have the advantage that less iterations are required to obtain the correct order of the underlying
method.

\section{Stochastic B--series}\label{bsa2:sec:prelim}
B--series, symbolized by $B(\phi,x_0;h)$, for SDEs were first constructed by Burrage and Burrage \cite{burrage96hso,burrage00oco} to study strong convergence in the Stratonovich case. In the following years, this approach has been further developed by several authors to study weak and strong convergence, for the It\^{o} and the Stratonovich case, see e.\,g.\ \cite{debrabant08bao} for an overview. A uniform theory for the construction of stochastic B--series has been presented in \cite{debrabant08bao}, in \cite{debrabantXXcos} this approach has been used to construct order conditions for implicit Taylor methods. Following the exposition of these two papers, we define in this section stochastic B--series and present some preliminary results that will be used later.

\subsection{Some useful definitions and preliminary results}
\begin{defi}[Trees] \thlabel{bsa2:def:trees}
    The set of $m+1$-colored, rooted trees \[T=\{\emptyset\}\cup T_0 \cup T_1 \cup
    \dots \cup T_m\] is recursively defined as follows:
    \begin{description}
    \item[a)] The graph $\bullet_l=[\emptyset]_l$ with only one vertex
        of color $l$ belongs to $T_l$.
    \end{description}
    Let $\tau=[\tau_1,\tau_2,\dots,\tau_{\kappa}]_l$ be the tree
    formed by joining the subtrees
    $\tau_1,\tau_2,\dots,\tau_{\kappa}$ each by a single branch to a
    common root of color $l$.
    \begin{description}
    \item[b)] If $\tau_1,\tau_2,\dots,\tau_{\kappa} \in T$ then
        $\tau=[\tau_1,\tau_2,\dots,\tau_{\kappa}]_l \in T_l$.
    \end{description}
\end{defi}
Thus, $T_l$ is the set of trees with an $l$-colored root, and $T$ is
the union of these sets.

\begin{defi}[Elementary differentials] \thlabel{bsa2:def:eldiff}
    For a tree $\tau \in T$ the elementary differential is
    a mapping $F(\tau):\real^d \rightarrow \real^d$ defined
    recursively by
    \begin{description}\setlength{\itemsep}{4mm plus2mm minus2mm}
    \item[a)] $F(\emptyset)(x_0)=x_0$,
    \item[b)] $F(\bullet_l)(x_0)=g_l(x_0)$,
    \item[c)] If $\tau=[\tau_1,\tau_2,\dots,\tau_{\kappa}]_l \in T_l$
        then
        \[
        F(\tau)(x_0)=g_l^{(\kappa)}(x_0)
        \big(F(\tau_1)(x_0),F(\tau_2)(x_0),\dots,F(\tau_{\kappa})(x_0)\big). \]
    \end{description}
\end{defi}

With these definitions in place, we can define the stochastic B--series:
\begin{defi}[B--series] \thlabel{bsa2:def:bseries}
 A (stochastic) B--series is
    a formal series of the form
    \[ B(\phi,x_0; h) = \sum_{\tau \in T}
    \alpha(\tau)\cdot\phi(\tau)(h)\cdot F(\tau)(x_0), \] where
    $\phi:T \rightarrow \Xi:=\big\{\{\varphi(h)\}_{h\geq0}:\;
    \varphi(h):~\Omega\to\real\text{ is Borel-measurable }\forall h\geq0
    \big\}$ assigns to each tree a random variable,  and
    $\alpha: T\rightarrow \mathbb{Q}$ is given by
    \begin{align*}
        \alpha(\emptyset)&=1,&\alpha(\bullet_l)&=1,
        &\alpha(\tau=[\tau_1,\dots,\tau_{\kappa}]_l)&=\frac{1}{r_1!r_2!\cdots
            r_{q}! } \prod_{j=1}^{\kappa} \alpha(\tau_j),
    \end{align*}
    where $r_1,r_2,\dots,r_{q}$ count equal trees among
    $\tau_1,\tau_2,\dots,\tau_{\kappa}$.
\end{defi}

Note that $\alpha(\tau)$ is the inverse of the order of the automorphism group of $\tau$.

To simplify the presentation, in the following we assume that all elementary differentials exist and all considered B--series converge. Otherwise, one has to consider truncated B--series and discuss the remainder term.

The next lemma is proved in \cite{debrabant08bao}.
\begin{lem} \thlabel{bsa2:lem:f_y} If $Y(h)=B(\phi, x_0; h)$ with $\phi(\emptyset) \equiv 1$ is some
    B--series and $f\in C^{\infty}(\real^d,\real^{\hat{d}})$, then
    $f(Y(h))$ can be written as a formal series of the form
    \begin{equation}
        \label{bsa2:eq:f}
        f(Y(h))=\sum_{u\in U_f} \beta(u)\cdot \psi_\phi(u)(h)\cdot G(u)(x_0),
    \end{equation}
    where
    \begin{description}\setlength{\itemsep}{4mm plus2mm minus2mm}
    \item[a)] $U_f$ is a set of trees derived from $T$ as follows: $[\emptyset]_f \in U_f$, and if\\
        $\tau_1,\tau_2,\dots,\tau_{\kappa}\in T\setminus\{\emptyset\}$ then
        $[\tau_1,\tau_2,\dots,\tau_{\kappa}]_f\in U_f$,
    \item[b)] $G([\emptyset]_f)(x_0)=f(x_0)$ and \\
        $\mbox{}\;\, G(u=[\tau_1,\dots,\tau_{\kappa}]_f)(x_0) =
        f^{(\kappa)}(x_0)
        \big(F(\tau_1)(x_0),\dots,F(\tau_{\kappa})(x_0)\big)$,
    \item[c)] $\beta([\emptyset]_f)=1$ and $\D
        \beta(u=[\tau_1,\dots,\tau_{\kappa}]_f)
        =\frac{1}{r_1!r_2!\cdots r_{q}!}\prod_{j=1}^{\kappa}
        \alpha(\tau_{{j}})$, \\where $r_1,r_2,\dots,r_{q}$ count
        equal trees among $\tau_1,\tau_2,\dots,\tau_{\kappa}$,
    \item[d)] $\psi_\phi([\emptyset]_f)\equiv1$ and
        $\psi_\phi(u=[\tau_1,\dots,\tau_{\kappa}]_f)(h) =
        \prod_{j=1}^{\kappa} \phi(\tau_j)(h)$.
    \end{description}
\end{lem}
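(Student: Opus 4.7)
The plan is to Taylor expand $f$ around $x_0$ and then reorganize the resulting multi-index sum into a sum indexed by the trees in $U_f$. Since $\phi(\emptyset)\equiv 1$ and $F(\emptyset)(x_0)=x_0$, the hypothesis lets me split the B--series as $Y(h)=x_0+R(h)$ with
\begin{equation*}
R(h)=\sum_{\tau\in T\setminus\{\emptyset\}}\alpha(\tau)\,\phi(\tau)(h)\,F(\tau)(x_0).
\end{equation*}
Applying Taylor's formula around $x_0$ and interpreting $f^{(\kappa)}(x_0)$ as a symmetric $\kappa$-linear form, I would start from $f(Y(h))=\sum_{\kappa\geq 0}\frac{1}{\kappa!}f^{(\kappa)}(x_0)\big(R(h),\ldots,R(h)\big)$, treating everything as a formal series as per the remark preceding the lemma.

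Next I would expand each power by multilinearity, producing an ordered sum
\begin{equation*}
\tfrac{1}{\kappa!}f^{(\kappa)}(x_0)\big(R(h)^{\kappa}\big)=\tfrac{1}{\kappa!}\!\!\sum_{(\tau_1,\ldots,\tau_\kappa)\in(T\setminus\{\emptyset\})^\kappa}\prod_{j=1}^\kappa \alpha(\tau_j)\phi(\tau_j)(h)\cdot f^{(\kappa)}(x_0)\big(F(\tau_1)(x_0),\ldots,F(\tau_\kappa)(x_0)\big).
\end{equation*}
By definitions b) and d) in the statement, the factor depending only on the tuple equals $\psi_\phi(u)(h)\cdot G(u)(x_0)$ for $u=[\tau_1,\ldots,\tau_\kappa]_f\in U_f$, so each ordered tuple already contributes a term of the required shape; what remains is to reorganise the scalar coefficients.

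The main, and essentially only, delicate step is the combinatorial collection. Since $f^{(\kappa)}$ is symmetric, any two orderings of the same multiset of subtrees $\{\tau_1,\ldots,\tau_\kappa\}$ produce the same tree $u\in U_f$ and the same summand. If the multiset consists of $q$ distinct trees with multiplicities $r_1,\ldots,r_q$, with $r_1+\cdots+r_q=\kappa$, the number of orderings is $\kappa!/(r_1!\cdots r_q!)$. Combining this multinomial count with the overall prefactor $1/\kappa!$ cancels the $\kappa!$ and leaves exactly $\beta(u)=\frac{1}{r_1!\cdots r_q!}\prod_{j=1}^\kappa\alpha(\tau_j)$, as required. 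The $\kappa=0$ contribution $f(x_0)$ matches $u=[\emptyset]_f$ with $\beta([\emptyset]_f)=1$, $\psi_\phi([\emptyset]_f)\equiv 1$, and $G([\emptyset]_f)(x_0)=f(x_0)$, so summing over all $u\in U_f$ yields the expansion~(\ref{bsa2:eq:f}). I do not expect any conceptual obstacle beyond this bookkeeping.
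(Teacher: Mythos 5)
Your proof is correct. The paper itself does not supply a proof of this lemma (it cites Debrabant--Kv{\ae}rn{\o}, SIAM J.\ Numer.\ Anal.\ 47 (2008/09), for it), but your argument---separate $Y(h)=x_0+R(h)$ using $\phi(\emptyset)\equiv1$, Taylor expand $f$ about $x_0$ as a formal series, expand each $f^{(\kappa)}(x_0)(R,\dots,R)$ by multilinearity over ordered $\kappa$-tuples of nonempty trees, and then use the symmetry of $f^{(\kappa)}$ to collect the $\kappa!/(r_1!\cdots r_q!)$ orderings of a multiset so that the $1/\kappa!$ cancels and leaves exactly $\beta(u)$---is the standard argument and establishes every one of the claims a)--d), including the $\kappa=0$ case $u=[\emptyset]_f$.
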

For notational convenience, in the following the $h$-dependency of the weight functions and the $x_0$-dependency of the elementary differentials will be suppressed whenever this is unambiguous, so $\Phi(\tau)(h)$ will be written as $\Phi(\tau)$ and $F(\tau)(x_0)$ as $F(\tau)$.

The next step is to present the composition rule for B--series. In the deterministic  case, this is e.\,g.\ given by \cite{hairer02gni}, using ordered trees. The same rule applies for multicolored trees, as in the stochastic case. But it is also possible to present the result without relying on ordered trees, as done in \cite{debrabantXXcos}, and this is the approach that will be used in the following.

Consider triples $(\tau,\vartheta,\omega)$ consisting of some $\tau\in T$,  a subtree $\vartheta$ sharing the root with $\tau$, and a remainder multiset $\omega$ of trees left over when $\vartheta$ is removed from $\tau$.  We also include the empty tree as a possible subtree, in which case the triple becomes $(\tau,\emptyset,\tau)$.
\begin{ex} \thlabel{bsa2:ex:triples}
Two examples of such triples are
\[
\left(\dtree{\tnr{j}}{\tnl{j} \tnr{j} \dtree{\tnr{j}}{\tnl{j} \tnr{j}}},
       \dtree{\tnr{j}}{\tnl{j} \tnr{j}},
       \{\tnr{j},\tnr{j},\tnr{j}\}
\right)
\text{ and }
\left(\dtree{\tnr{j}}{\tnl{j} \tnr{j} \dtree{\tnr{j}}{\tnl{j} \tnr{j}}},
       \dtree{\tnr{j}}{\tnl{j} \tnr{j}},
       \left\{\dtree{\tnr{j}}{\tnl{j} \tnr{j}}\right\}
\right).
\]
\end{ex}
So, for the same $\tau$ and $\vartheta$ there might be different $\omega$'s.

We next define
$ST(\tau)$ as the set of all possible subtrees of $\tau$ together with the corresponding  remainder multiset $\omega$, that is, for each $\tau\in T\backslash \emptyset$ we have
\begin{align*}
ST(\node{l}) &= \{ (\emptyset,\node{l}), (\node{l},\emptyset) \}, \\
ST(\tau =[\tau_1,\dotsc,\tau_{\kappa}]_l) &=
\big\{(\vartheta,\omega)\;:\; \vartheta=[\vartheta_1,\dotsc,\vartheta_{\kappa}]_l, \quad
   \omega=\{\omega_1,\dotsc,\omega_{\kappa}\}, \\ &\qquad
   (\vartheta_i,\omega_i) \in ST(\tau_i), \quad i=1,\dots,\kappa \big\} \cup
   (\emptyset,\tau).
\end{align*}
We also have to take care of possible equal terms in the formula presented below. This is done as follows:
For a given triple $(\tau,\vartheta,\omega)$ write first $\vartheta=[\vartheta_1,\dotsc,\vartheta_{\kappa_{\vartheta}}]_l = [\bar{\vartheta}_1^{r_1},\dotsc,\bar{\vartheta}_{q}^{r_q}]_l$, where the latter only expresses that $\vartheta$ is composed by $q$ different nonempty trees, each appearing $r_i$ times, hence $\sum_{i=1}^qr_i=\kappa_{\vartheta}$.
Let $\tau=[\tau_1,\dotsc,\tau_{\kappa}]_l$. For $i=1,\dots,q$, each $\bar{\vartheta}_i$ is a subtree of some of the $\tau_j$'s, with corresponding remainder multisets $\omega_j$. Assume that there are exactly $p_i$ different such triples $(\bar{\tau}_{ik},\bar{\vartheta}_i,\bar{\omega}_{ik})$ each appearing exactly $r_{ik}$ times so that $\sum_{k=1}^{p_i}r_{ik}=r_i$.
Finally, let $\bar\delta_k\in\omega$ be the distinct trees with multiplicity $s_k$, $k=1,\dots,p_0$, of the remainder multiset which are directly connected to the root of $\tau$.
Then, $\tau$ can be written as
\begin{equation}
\label{bsa2:eq:alltrees}
\tau=[\bar\delta_1^{s_1},\dotsc, \bar\delta_{p_0}^{s_{p_0}},
    \bar{\tau}_{11}^{r_{11}},\dotsc,\bar{\tau}_{1p_1}^{r_{1p_1}} ,\dotsc,
    \bar{\tau}_{q1}^{r_{q1}},\dotsc,\bar{\tau}_{qp_q}^{r_{qp_q}}]_l=
    [ \bar{\tau}_1^{R_1},\dotsc,\bar{\tau}_{Q}^{R_Q}]_l,
\end{equation}
where the rightmost expression above indicates that $\tau$ is composed by $Q$ different trees each appearing $R_i$ times.

With these definitions, we can state the following theorem, proved in \cite{debrabantXXcos}:
\begin{theo}[Composition of B--series] \thlabel{bsa2:thm:comp}
Let \mbox{$\phi_x,\phi_y\;:\;T \rightarrow \Xi$} and $\phi_x(\emptyset)\equiv 1$. Then the B--series $B(\phi_x,x_0;h)$ inserted into $B(\phi_y,\cdot;h)$ is again a B--series,
\[ B(\phi_y,B(\phi_x,x_0;h);h)=B(\phi_x\mlt \phi_y,x_0;h), \]
where
\[ \left(\phi_x\mlt \phi_y \right)(\tau) = \sum_{(\vartheta,\omega)\in ST(\tau)} \gamma(\tau,\vartheta,\omega)\left( \phi_y(\vartheta)\prod_{\delta \in \omega} \phi_x(\delta)\right)(\tau),
\]
$\gamma(\emptyset,\emptyset,\emptyset)=1$, and
\[
\gamma(\tau,\vartheta,\omega)=\frac{R_1!\dotsm R_Q!}{s_1!\dotsm s_{p_0}! r_{11}!\dotsm r_{qp_q}!}
\prod_{i=1}^q\prod_{k=1}^{p_i} \gamma(\bar\tau_{ik},\bar\vartheta_i,\bar\omega_{ik})^{r_{ik}}
\]
for $\tau$ given by \eqref{bsa2:eq:alltrees}.
\end{theo}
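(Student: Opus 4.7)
The plan is to prove the theorem by induction on the subtree $\vartheta$, with Lemma \thref{bsa2:lem:f_y} as the principal tool for expanding elementary differentials evaluated at B-series. Writing $Y = B(\phi_x, x_0; h)$ and
\begin{equation*}
B(\phi_y, Y; h) = \sum_{\vartheta \in T} \alpha(\vartheta)\, \phi_y(\vartheta)\, F(\vartheta)(Y),
\end{equation*}
I would show that for each fixed $\vartheta$ the term $F(\vartheta)(Y)$, viewed as a function of $x_0$, is itself a B-series whose trees are exactly those $\tau$ admitting $\vartheta$ as a root-sharing subtree, the corresponding remainder multiset $\omega$ supplying the $\phi_x$-factors. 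Reindexing the outer sum over $\vartheta$ then produces a sum over $\tau$ with inner sum over all $(\vartheta, \omega) \in ST(\tau)$.

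For the base case $\vartheta = \emptyset$, $F(\emptyset)(Y) = Y = B(\phi_x, x_0; h)$ together with $\phi_y(\emptyset) \equiv 1$ accounts precisely for the summand $(\emptyset, \tau) \in ST(\tau)$. For $\vartheta = \bullet_l$, Lemma \thref{bsa2:lem:f_y} applied to $f = g_l$ expands $g_l(Y)$ as a sum over trees $\tau = [\tau_1, \ldots, \tau_\kappa]_l$ with root color $l$; the children here form the entire remainder multiset, all attached directly to the root.

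For the inductive step with $\vartheta = [\vartheta_1, \ldots, \vartheta_{\kappa_\vartheta}]_l$, I would start from
\begin{equation*}
F(\vartheta)(Y) = g_l^{(\kappa_\vartheta)}(Y)\bigl(F(\vartheta_1)(Y), \ldots, F(\vartheta_{\kappa_\vartheta})(Y)\bigr),
\end{equation*}
apply Lemma \thref{bsa2:lem:f_y} componentwise to expand the outer factor $g_l^{(\kappa_\vartheta)}(Y)$ around $x_0$, thereby producing extra subtrees $\bar\delta_1, \ldots, \bar\delta_{p_0}$ grafted onto the $l$-colored root, and then apply the induction hypothesis to each $F(\vartheta_i)(Y)$, producing extensions $\tau_i$ of $\vartheta_i$ with their own remainders $\omega_i$. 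Multilinearity of $g_l^{(\kappa_\vartheta)}$ recombines the pieces into a sum over $\tau = [\bar\delta_1, \ldots, \bar\delta_{p_0}, \tau_1, \ldots, \tau_{\kappa_\vartheta}]_l$, which ranges exactly over the trees admitting $(\vartheta, \omega) \in ST(\tau)$ with $\omega = \{\bar\delta_k\} \cup \bigcup_i \omega_i$.

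The main obstacle, which I expect to consume most of the effort, is verifying that all rational prefactors collapse into $\alpha(\tau)\, \gamma(\tau, \vartheta, \omega)$. Three sources of combinatorics interlock: the factors $1/s_k!$ from the $\beta$-coefficients of Lemma \thref{bsa2:lem:f_y} for repeated $\bar\delta_k$ grafted at the root, the factors $1/r_i!$ hidden in $\alpha(\vartheta)$ when equal $\vartheta_i$'s are grouped, and the factors $1/R_j!$ inside $\alpha(\tau)$ on the output side. The key identity is that the multinomial factor $R_1!\dotsm R_Q!/(s_1!\dotsm s_{p_0}!\, r_{11}!\dotsm r_{qp_q}!)$ counts, for each distinct child of the root of $\tau$ with multiplicity $R_j$, the number of ways to partition its $R_j$ identical copies between the root-grafted contribution from $g_l^{(\kappa_\vartheta)}$ and the contributions from each of the $q$ subtree expansions; combined with the recursive factors $\gamma(\bar\tau_{ik}, \bar\vartheta_i, \bar\omega_{ik})^{r_{ik}}$ inherited by applying the induction hypothesis $r_{ik}$ times to identical triples, this reproduces the stated formula for $\gamma$.
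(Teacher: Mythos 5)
The paper does not actually prove \thref{bsa2:thm:comp} in-line; it cites \cite{debrabantXXcos}. So the comparison below is with what the proof must accomplish, not with a proof literally appearing in the source.

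Your overall plan is the right one and, I believe, essentially the one used in the reference: fix $\vartheta$, expand $F(\vartheta)(Y)$ as a series over the $\tau$'s that admit $\vartheta$ as a root-sharing subtree by structural induction on $\vartheta$, using \thref{bsa2:lem:f_y} to generate the root-grafted subtrees, then re-sum over $\tau$. Your base cases are correct: $\vartheta=\emptyset$ reduces to $Y$ itself and corresponds to the pair $(\emptyset,\tau)\in ST(\tau)$, and $\vartheta=\bullet_l$ is literally \thref{bsa2:lem:f_y} with $f=g_l$, where the required identity $\alpha(\tau)\gamma(\tau,\bullet_l,\omega)=\beta(u)$ holds because $\gamma=1$ there. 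The inductive step's decomposition into (i) extra arguments grafted at the root via the expansion of $g_l^{(\kappa_\vartheta)}(Y)$ and (ii) recursive extensions of each $\vartheta_i$ into $\tau_i$ is exactly what the recursive structure of $\gamma$ encodes. You also correctly identify that the real content is the coefficient bookkeeping and name the three interlocking symmetry factors. So the approach is sound.

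The gap is that the coefficient verification is asserted in one sentence rather than carried out, and this is where all the difficulty lies. Concretely, you must prove the identity
\begin{equation*}
\alpha(\vartheta)\,\prod_{\delta\in\omega}\alpha(\delta)\;=\;\alpha(\tau)\,\gamma(\tau,\vartheta,\omega),
\end{equation*}
for every triple, by the same structural induction. The multinomial factor $R_1!\cdots R_Q!/(s_1!\cdots s_{p_0}!\,r_{11}!\cdots r_{qp_q}!)$ does count the partitions of the $R_j$ copies of each distinct child of $\tau$ among ``grafted at root'' and ``extended from $\bar\vartheta_1,\dots,\bar\vartheta_q$'', but you also need to check that the $\alpha(\delta)$-factors coming from the B--series expansion of $Y$ at each internal node of $\vartheta$ (not only at the root) are absorbed into the recursive factors $\gamma(\bar\tau_{ik},\bar\vartheta_i,\bar\omega_{ik})^{r_{ik}}$ via the induction hypothesis, and that the $1/s_k!$ from $\beta(u)$ in \thref{bsa2:lem:f_y}, the $1/r_i!$ from $\alpha(\vartheta)$, and the $1/R_j!$ from $\alpha(\tau)$ cancel in exactly the advertised way. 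Two further small points you should make explicit: first, $\gamma(\tau,\emptyset,\{\tau\})=1$ is a degenerate case not covered by the formula in \eqref{bsa2:eq:alltrees} (when $\vartheta=\emptyset$, the single element of $\omega$ is $\tau$ itself, not a child of $\tau$), so it needs to be set by convention, consistent with the $(\emptyset,\tau)$ summand contributing just $\phi_x(\tau)$; second, ``applying \thref{bsa2:lem:f_y} componentwise to $g_l^{(\kappa_\vartheta)}$'' should be phrased as applying it to the map $x\mapsto g_l^{(\kappa_\vartheta)}(x)$ valued in symmetric multilinear forms, and then observing that its $j$-th derivative $g_l^{(\kappa_\vartheta+j)}$ is precisely the elementary differential of a tree with $\kappa_\vartheta+j$ children at the $l$-colored root. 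None of these is a wrong turn, but as written the proposal states where the work is without doing it, and an examiner would want to see the coefficient induction executed.
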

The combinatorial term $\gamma$ gives the number of equal terms that will appear if the composition rule using ordered trees is preferred.

In general, the composition law is not linear, neither is it associative.
It is, however, linear in its second operand. Further,
if both $\phi_x(\emptyset)=\phi_y(\emptyset)\equiv1$, then the composition law
  can be turned into a group operation (Butcher group, see
  \cite{butcher72aat,hairer02gni,hairer74otb} for the deterministic case): The
  inverse element $\phi^{-1}(\tau)$ can be recursively computed by
\begin{equation} \label{bsa2:eq:id_element}
  (\phi\mlt\phi^{-1})(\tau)=e(\tau)\equiv
  \begin{cases}
    1 & \text{if} \;\tau=\emptyset, \\
    0 & \text{otherwise},
  \end{cases}
\end{equation}
and associativity is proved by
\begin{equation}
 B(\phi_z,B(\phi_y,B(\phi_x,x_0;h);h);h)
   = B(\phi_x\mlt (\phi_y\mlt \phi_z),x_0;h)
   = B((\phi_x\mlt \phi_y) \mlt \phi_z),x_0;h). 
   \label{bsa2:eq:ass_z}
   \end{equation}
This holds even if $\phi_z(\emptyset)\not\equiv1$.

The next result will be needed for the investigation of modified Newton iterations.
\begin{lem} \thlabel{bsa2:lem:Bfprime}
If $\phi_x(\emptyset)\equiv0$ we have
\[
\partial_2B(\phi_{y},{x_{0};h})B(\phi_{x},{x_{0};h})
=B(\phi_{x}\mltl\phi_{y},{x_{0};h}),
\]
where the bi-linear operator $\mltl$ is given by
\begin{equation} \label{bsa2:eq:mltl}
(\phi_x\mltl\phi_y)(\tau)=\begin{cases}
  0
  &if \; \tau=\emptyset,\\
  \sum\limits_{(\vartheta,\{\delta\})\in SP(\tau)}\gamma(\tau,\vartheta,\{\delta\})\cdot
  \phi_{y}(\vartheta)\phi_{x}
  (\delta) &\text{otherwise},
  \end{cases}
\end{equation}
with
\[
SP(\tau)=\{(\vartheta,\omega)\in ST(\tau):~\omega\text{ contains
exactly one element } \delta\}.\]
\end{lem}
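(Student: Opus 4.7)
The plan is to reduce the identity to Theorem~\thref{bsa2:thm:comp} by introducing a scalar parameter $\epsilon$ and differentiating at $\epsilon=0$. Define $\phi_x^\epsilon:T\to\Xi$ by $\phi_x^\epsilon(\emptyset)\equiv 1$ and $\phi_x^\epsilon(\tau)=\epsilon\,\phi_x(\tau)$ for $\tau\neq\emptyset$. Because $\phi_x(\emptyset)\equiv 0$ and $F(\emptyset)(x_0)=x_0$, Definition~\thref{bsa2:def:bseries} gives
\begin{equation*}
B(\phi_x^\epsilon,x_0;h)=x_0+\epsilon\,B(\phi_x,x_0;h).
\end{equation*}
Since $\phi_x^\epsilon(\emptyset)\equiv 1$, Theorem~\thref{bsa2:thm:comp} applies and yields
\begin{equation*}
B\bigl(\phi_y,\,x_0+\epsilon\,B(\phi_x,x_0;h);\,h\bigr)=B(\phi_x^\epsilon\mlt\phi_y,\,x_0;\,h).
\end{equation*}

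Next, I would differentiate both sides with respect to $\epsilon$ at $\epsilon=0$. The chain rule turns the left-hand side into $\partial_2 B(\phi_y,x_0;h)\,B(\phi_x,x_0;h)$, which is the left-hand side of the lemma. On the right, the elementary differentials $F(\tau)(x_0)$ and the factors $\alpha(\tau)$ are $\epsilon$-independent, so it suffices to compute $(d/d\epsilon)\bigl|_{\epsilon=0}(\phi_x^\epsilon\mlt\phi_y)(\tau)$ coefficient by coefficient. By the recursive construction of $ST(\tau)$, the remainder multiset $\omega$ consists only of non-empty trees, and therefore
\begin{equation*}
\prod_{\delta\in\omega}\phi_x^\epsilon(\delta)=\epsilon^{|\omega|}\prod_{\delta\in\omega}\phi_x(\delta).
\end{equation*}
Hence the derivative annihilates every triple except those with $|\omega|=1$, i.e.\ precisely the triples in $SP(\tau)$, and leaves the combinatorial coefficient $\gamma(\tau,\vartheta,\{\delta\})$ untouched. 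For $\tau=\emptyset$ the only triple contributes a constant in $\epsilon$, so the derivative vanishes, consistent with $(\phi_x\mltl\phi_y)(\emptyset)=0$. A direct comparison with \eqref{bsa2:eq:mltl} finishes the argument.

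The step that requires most care is the combinatorial bookkeeping: one has to verify, by induction on $\tau$, that the remainder multiset generated by $ST(\tau)$ indeed never contains the empty tree (so the $\epsilon$-powers are counted correctly) and that the subset $\{\,(\vartheta,\omega)\in ST(\tau):|\omega|=1\,\}$ is exactly $SP(\tau)$ with the same $\gamma$-coefficient. Both facts are routine from the recursive definitions, but a slip in conventions here would break the identification. The assumed convergence of all B-series stated before the lemma legitimises the term-by-term differentiation under the sum, so no further analytic work is needed.
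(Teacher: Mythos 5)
Your proof is correct, but it takes a genuinely different route from the paper. The paper proves the identity head-on by structural induction: it reduces the statement to the two combinatorial identities
$\partial F(\vartheta)F(\delta)=\sum_{\tau\in\mathcal{A}(\vartheta,\delta)}\beta(\tau,\vartheta,\delta)F(\tau)$
and
$\alpha(\vartheta)\alpha(\delta)\beta(\tau,\vartheta,\delta)=\alpha(\tau)\gamma(\tau,\vartheta,\{\delta\})$,
where $\mathcal{A}(\vartheta,\delta)$ is the set of trees obtained by attaching $\delta$ to some vertex of $\vartheta$, and then verifies both by induction on $\vartheta$. You instead embed $\phi_x$ into a one-parameter family $\phi_x^\epsilon$ with $\phi_x^\epsilon(\emptyset)\equiv 1$, invoke \thref{bsa2:thm:comp} to get a B--series in $\epsilon$, and differentiate at $\epsilon=0$: since $\omega$ never contains the empty tree, each triple contributes a factor $\epsilon^{|\omega|}$, so the derivative picks out exactly $|\omega|=1$, i.e.\ $SP(\tau)$, with the $\gamma$-coefficient untouched. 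This is a clean ``linearize the composition law at the identity'' argument; it reuses the machinery of \thref{bsa2:thm:comp} rather than redoing the combinatorics of attaching $\delta$ to $\vartheta$, and it makes transparent why the same $\gamma$ appears in both $\mlt$ and $\mltl$. What it buys: less bookkeeping and a conceptual explanation of $\mltl$ as the derivative of $\mlt$. What the paper's proof buys: it is self-contained, produces the auxiliary coefficient $\beta$ and the set $\mathcal{A}(\vartheta,\delta)$ explicitly (useful elsewhere), and does not rely on the formal term-by-term differentiation under the infinite sum, though the paper's standing convergence assumption makes your formal manipulation legitimate as well. Both are valid proofs.
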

\begin{proof}
Written in full, the statement of the theorem claims that
\begin{align}
  &\sum_{\vartheta\in T}\sum_{\delta\in T} \alpha(\vartheta)
  \alpha(\delta)\cdot \phi_y(\vartheta)\phi_x(\delta) \cdot
  \left(\partial F(\vartheta)F(\delta)\right) \nonumber \\
  &\qquad = \sum_{\tau\in T\setminus\{\emptyset\}} \alpha(\tau) \left(\sum_{(\vartheta,\{\delta\})
  \in SP(\tau)} \gamma(\tau,\vartheta,\{\delta\})\cdot \phi_y(\vartheta)\phi_x(\delta) \right) \cdot
  F(\tau). \label{bsa2:eq:dbb_full}
\end{align}
This is true if
\begin{align}
\left(\partial F(\vartheta)F(\delta)\right)& = \sum_{\tau\in \mathcal{A}(\vartheta,\delta)} \beta(\tau,\vartheta,\delta) F(\tau)\label{bsa2:eq:dbb_a} \\ \intertext{and}
\alpha(\vartheta)\alpha(\delta)\beta(\tau,\vartheta,\delta) &= \alpha(\tau)\gamma(\tau,\vartheta,\{\delta\})\label{bsa2:eq:dbb_b},
\end{align}
where $\mathcal{A}(\vartheta,\delta)$ is the set of all $\tau$'s constructed by attaching $\delta$ to one of the vertices of $\vartheta$. We will prove this by induction.

First, let $\vartheta=\emptyset$. Since $\partial F(\emptyset) F(\delta) = F(\delta)$ we have $\tau=\delta$ and \eqref{bsa2:eq:dbb_a} and \eqref{bsa2:eq:dbb_b} are trivially true with $\beta(\tau,\emptyset,\tau)=1$. Now, let $\vartheta=\bullet_l$. Then $\partial F(\vartheta)F(\delta)= g_l'F(\delta)=F([\delta]_l)$. As $\mathcal{A}(\vartheta,\delta)=\{[\delta]_l\}$ this gives $\beta([\delta]_l,\bullet_l,\delta)=1$, and again \eqref{bsa2:eq:dbb_b} is trivially true. Finally, let
\[ \vartheta=[\bar{\vartheta}_1^{r_1},\dotsc,
   \bar{\vartheta}_i^{r_i},\dotsc,\bar{\vartheta}_q^{r_q}]_l\]
   with distinct trees $\bar{\vartheta}_1,\dots,\bar{\vartheta}_q$. Then
\begin{align*}
 \partial F(\vartheta)F(\delta) &= g_l^{(\kappa_{\vartheta}+1)}
 \big(F(\delta),
 \overbrace{F(\bar{\vartheta}_1),\dotsc, F(\bar{\vartheta}_1)}^{r_1 \text{ times}},\dotsc,
 \overbrace{F(\bar{\vartheta}_q),\dotsc, F(\bar{\vartheta}_q)}^{r_q \text{ times}}
 \big)\\
   &+ \sum_{i=1}^q r_i g_l^{(\kappa_{\vartheta})} \big(
   F(\bar{\vartheta}_1),\dotsc,
   \partial F(\bar\vartheta_i)F(\delta),\overbrace{F(\bar{\vartheta}_i),\dotsc,
     F(\bar{\vartheta}_i)}^{r_i-1\text{ times}}, \dotsc,F({\bar\vartheta_q})\big),
\end{align*}
where $\kappa_{\vartheta}=\sum_{i=1}^qr_i$,
so $\tau \in \mathcal{A}(\vartheta,\delta)$ is either
\[ \tau= [\delta,\bar{\vartheta}_1^{r_1},\dotsc,
   \bar{\vartheta}_i^{r_{i}},\dotsc,\bar{\vartheta}_q^{r_q}]_l \quad \text{ with } \quad
   \beta(\tau,\vartheta,\delta)=1\]
or
\[ \tau= [\bar{\vartheta}_1^{r_1},\dotsc,
   \tau_i,\bar{\vartheta}_i^{r_{i}-1},\dotsc,\bar{\vartheta}_q^{r_q}]_l
\text{ with }\tau_i\in\mathcal{A}(\bar{\vartheta}_i,\delta)
    \text{ and }
    \beta(\tau,\vartheta,\delta)=r_i\beta(\tau_i,\bar{\vartheta}_i,\delta).\]
In the first case, if $\delta=\bar{\vartheta}_j$
for some $j$, then $\alpha(\tau)=\alpha(\vartheta)\alpha(\delta)/M$ and $\gamma(\tau,\vartheta,\{\delta\})=M$ with $M=r_j+1$. Otherwise the same is valid with $M=1$. So \eqref{bsa2:eq:dbb_b} holds.
In the second case, assume that our induction hypothesis \eqref{bsa2:eq:dbb_b} is true for all $\tau_i \in \mathcal{A}(\bar{\vartheta}_i,\delta)$. We obtain
\[ \alpha(\tau) = \frac{r_i}{M}
   \frac{\alpha(\tau_i)}{\alpha(\bar{\vartheta}_i)}\alpha(\vartheta)
    \quad \text{ and } \quad
   \gamma(\tau,\vartheta,\{\delta\})=M
   \gamma(\tau_i,\bar{\vartheta}_i,\{\delta\}) \]
   with $M=r_j+1$ if $\tau_i=\bar{\vartheta}_j$
for some $j$ and $M=1$ otherwise.
It follows that
\begin{align*}
\alpha(\tau)\gamma(\tau,\vartheta,\{\delta\}) &= r_i
   \frac{\alpha(\tau_i)}{\alpha(\bar{\vartheta}_i)}
   \gamma(\tau_i,\bar{\vartheta}_i,\{\delta\})\alpha(\vartheta)
   = \alpha(\vartheta) \alpha(\delta) r_i \beta(\tau_i,\bar{\vartheta}_i,\delta)
   \\&
   = \alpha(\vartheta) \alpha(\delta) \beta(\tau,\vartheta,\delta).
\end{align*}
\end{proof}

\subsection{B--series of the exact and the numerical solutions}
From the results of the previous subsection, it is possible to find the B--series of the exact and numerical solutions. Here, the proofs are only sketched, for details consult \cite{debrabant08bao,debrabantXXcos}.

\begin{theo} \thlabel{bsa2:thm:Bex} The solution $X(t_0+h)$ of \eqref{bsa2:SDE_new}
    can be written as a B--series $B(\varphi,x_0; h)$ with
    \begin{align*}
        \varphi(\emptyset)&\equiv1,&\varphi(\bullet_l)(h)&=W_l(h), &
        \varphi(\tau=[\tau_1,\dots,\tau_{\kappa}]_l)(h)&=\int_0^h
        \prod_{j=1}^{\kappa} \varphi(\tau_j)(s)\star dW_l(s).
    \end{align*}
\end{theo}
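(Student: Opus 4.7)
My plan is to derive the B--series representation directly from the integral equation \eqref{bsa2:SDE_new} by a formal Picard-type argument, using Lemma \thref{bsa2:lem:f_y} to compose $g_l$ with an already-B--series-valued argument. The induction is on the ``complexity'' of the tree (e.g.\ the number of vertices), so that I can assume the coefficient of every tree of smaller size is already given correctly by $\varphi$.

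First I would make the ansatz that $X(t_0+s)=B(\varphi,x_0;s)$ is a formal B--series with some weight function $\varphi$ satisfying $\varphi(\emptyset)\equiv 1$, and substitute this into \eqref{bsa2:SDE_new}. For each $l\in\{0,1,\dots,m\}$, Lemma \thref{bsa2:lem:f_y} applied with $f=g_l$ expresses $g_l(X(t_0+s))$ as a series indexed by the trees $u=[\tau_1,\dots,\tau_\kappa]_{g_l}\in U_{g_l}$, where $G(u)(x_0)=g_l^{(\kappa)}(x_0)(F(\tau_1),\dots,F(\tau_\kappa))$. Under the natural identification $[\tau_1,\dots,\tau_\kappa]_{g_l}\leftrightarrow [\tau_1,\dots,\tau_\kappa]_l\in T_l$, this is precisely $F([\tau_1,\dots,\tau_\kappa]_l)$, and Lemma \thref{bsa2:lem:f_y}(c) gives the same combinatorial factor $\tfrac{1}{r_1!\cdots r_q!}\prod_j\alpha(\tau_j)$ that appears in Definition \thref{bsa2:def:bseries} for $\alpha([\tau_1,\dots,\tau_\kappa]_l)$, while (d) gives $\psi_\varphi(u)(s)=\prod_{j=1}^\kappa\varphi(\tau_j)(s)$.

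Next I would integrate term by term against $\star dW_l(s)$ from $0$ to $h$. Linearity of the integral (formal at this level) yields
\begin{align*}
\int_0^h g_l(X(t_0+s))\star dW_l(s)
=\sum_{\tau=[\tau_1,\dots,\tau_\kappa]_l\in T_l}\alpha(\tau)\,
\Bigl(\int_0^h\prod_{j=1}^\kappa\varphi(\tau_j)(s)\star dW_l(s)\Bigr) F(\tau),
\end{align*}
and summing over $l=0,\dots,m$ and adding the $x_0$ term (which corresponds to $\tau=\emptyset$ with $\varphi(\emptyset)\equiv 1$), I obtain again a B--series in the variable $h$. Matching coefficients of $F(\tau)$ on both sides of \eqref{bsa2:SDE_new} forces exactly the recursive formula
\[ \varphi([\tau_1,\dots,\tau_\kappa]_l)(h)=\int_0^h\prod_{j=1}^\kappa\varphi(\tau_j)(s)\star dW_l(s), \]
together with $\varphi(\bullet_l)(h)=\int_0^h\star dW_l(s)=W_l(h)$ and $\varphi(\emptyset)\equiv 1$, as claimed.

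The main point that requires care, and the step I expect to be the main obstacle, is the bookkeeping of the combinatorial factors: one must verify that the factor $\beta(u)$ produced by Lemma \thref{bsa2:lem:f_y} combines with the pre-existing $\alpha(\tau_j)$'s of the subtrees to reproduce exactly $\alpha(\tau)$ as defined in Definition \thref{bsa2:def:bseries}, so that no spurious numerical factor survives. Once this matching is checked tree by tree, induction on the number of vertices of $\tau$ shows that $\varphi$ is uniquely determined by the recursion above, and standard estimates (which I would only quote, referring to \cite{debrabant08bao,debrabantXXcos}) justify the formal manipulations and convergence under our blanket smoothness and truncation assumptions.
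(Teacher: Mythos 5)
Your proposal is correct and follows essentially the same route as the paper's own proof: make the B--series ansatz $X(t_0+h)=B(\varphi,x_0;h)$, apply Lemma \thref{bsa2:lem:f_y} to $g_l(B(\varphi,x_0;h))$ (the paper packages this as the formula for $\varphi'_l$), insert into the integral form of the SDE, and compare term by term. The combinatorial check you flag as the main obstacle is in fact immediate, since the formula for $\beta(u)$ in Lemma \thref{bsa2:lem:f_y}(c) is identical to that for $\alpha(\tau)$ in Definition \thref{bsa2:def:bseries}, which is precisely why the paper states the result directly with $\alpha(\tau)$.
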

\begin{proof}
Write the exact solution as some B--series $X(t_0+h)=B(\varphi,x_0;h)$. As $\varphi(\emptyset)\equiv1$, apply \thref{bsa2:lem:f_y} to $g_l(X(t_0+h))$ and \thref{bsa2:def:trees,bsa2:def:eldiff,bsa2:def:bseries} to obtain
\begin{equation}
    \label{bsa2:eq:g-series}
    g_l(B(\varphi,x_0;h)) = \sum_{\tau\in T_l}\alpha(\tau)
    \cdot \varphi'_l(\tau)(h)\cdot F(\tau)(x_0)
\end{equation}
in which
\[ \varphi'_l(\tau)(h) =
\begin{cases}
    1 & \text{if } \tau=\bullet_{l}, \\
    \D \prod_{j=1}^{\kappa} \varphi(\tau_j)(h) & \text{if} \;
    \tau=[\tau_1,\dots,\tau_{\kappa}]_l \in T_l.
\end{cases} \]
Insert this into the SDE \eqref{bsa2:SDE} and compare term by term.
\end{proof}

\begin{theo} \thlabel{bsa2:thm:Bnum} The numerical solution $Y_1$ given by \eqref{bsa2:eq:STM} can be written as a B--series
    \[\qquad Y_1 = B(\Phi,x_0; h)\] with $\Phi$ recursively defined by
    \begin{subequations} \label{bsa2:eq:B_iter_Y}
        \begin{eqnarray}
\Phi(\emptyset) &\equiv& 1,\\
\Phi(\tau)&=& \Phi_{ex}(\tau)+(\Phi\mlt\Phi_{im})(\tau).
        \end{eqnarray}
    \end{subequations}
\end{theo}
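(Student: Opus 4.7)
The proof is essentially an application of the composition rule (\thref{bsa2:thm:comp}) combined with a coefficient comparison. The plan is as follows.

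First, I would postulate that $Y_1$ admits a formal B--series representation $Y_1 = B(\Phi, x_0; h)$ with some (yet unknown) weight function $\Phi : T \to \Xi$, and derive the recursion for its coefficients, thereby simultaneously proving existence and uniqueness at the level of formal series. Since $Y_0 = x_0 = B(e, x_0; h)$ where $e(\emptyset) \equiv 1$ and $e(\tau) \equiv 0$ for $\tau \neq \emptyset$, the ansatz $\Phi(\emptyset) \equiv 1$ is natural, but it should fall out of the analysis rather than be imposed.

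Next, I would apply \thref{bsa2:thm:comp} to the implicit term. Since $\Phi(\emptyset) \equiv 1$ (assumed and to be verified), we have
\[
   B(\Phi_{im}, Y_1; h) = B(\Phi_{im}, B(\Phi, x_0; h); h) = B(\Phi \mlt \Phi_{im}, x_0; h).
\]
Inserting this and $B(\Phi_{ex}, x_0; h)$ into the scheme \eqref{bsa2:eq:STM} (with $n=0$, $h_0=h$) and using that B--series are linear in their weight function, the defining equation $Y_1 = B(\Phi_{ex}, x_0; h) + B(\Phi_{im}, Y_1; h)$ becomes
\[
   B(\Phi, x_0; h) = B(\Phi_{ex} + \Phi \mlt \Phi_{im}, x_0; h).
\]
Equating the coefficient of $\alpha(\tau) \cdot F(\tau)(x_0)$ for each $\tau \in T$ then yields $\Phi(\tau) = \Phi_{ex}(\tau) + (\Phi \mlt \Phi_{im})(\tau)$.

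Two points remain to be checked. For $\tau = \emptyset$, the only triple in $ST(\emptyset)$ is $(\emptyset,\emptyset,\emptyset)$, so $(\Phi \mlt \Phi_{im})(\emptyset) = \gamma(\emptyset,\emptyset,\emptyset)\,\Phi_{im}(\emptyset) = 0$, and combined with $\Phi_{ex}(\emptyset) \equiv 1$ this confirms $\Phi(\emptyset) \equiv 1$, consistent with the ansatz. For $\tau \neq \emptyset$, I would verify that the recursion is well--defined: in $(\Phi \mlt \Phi_{im})(\tau) = \sum_{(\vartheta,\omega) \in ST(\tau)} \gamma(\tau,\vartheta,\omega)\,\Phi_{im}(\vartheta)\,\prod_{\delta \in \omega}\Phi(\delta)$, the term with $\vartheta = \emptyset$ (and hence $\omega = \{\tau\}$) contributes $\Phi_{im}(\emptyset)\,\Phi(\tau) = 0$, so all remaining terms involve $\Phi$ only on trees strictly smaller than $\tau$. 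Hence $\Phi$ is uniquely determined by induction on the number of vertices.

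The main (and essentially only) obstacle is bookkeeping: one must be careful that the composition formula is applied with the right roles for $\phi_x$ and $\phi_y$, and that the vanishing of the $\vartheta = \emptyset$ contribution (thanks to $\Phi_{im}(\emptyset) \equiv 0$) is used to both break the apparent circularity in the recursion and to fix $\Phi(\emptyset)$. Everything else follows directly from \thref{bsa2:thm:comp} and linearity of B--series, as already indicated in the sketch of \cite{debrabantXXcos}.
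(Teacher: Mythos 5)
Your proof follows the same approach as the paper's (which is only a sketch): write $Y_1 = B(\Phi,x_0;h)$, substitute into \eqref{bsa2:eq:STM}, apply \thref{bsa2:thm:comp} to the implicit term, and compare term by term. You additionally spell out why $\Phi(\emptyset)\equiv1$ and why the $\vartheta=\emptyset$ contribution vanishes so that the recursion is well-defined by induction on the number of vertices, which are exactly the details the paper leaves implicit.
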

\begin{proof}
Write $Y_1=B(\Phi,x_0;h)$ and insert this into \eqref{bsa2:eq:STM}. As $\Phi(\emptyset)=\Phi_{ex}(\emptyset)+\Phi_{im}(\emptyset)\equiv1$, apply \thref{bsa2:thm:comp}, and compare term by term.
\end{proof}

To study the consistency of the numerical methods, we need to assign to each tree an order:
\begin{defi}[Tree order]
    The order of a tree $\tau \in T$ respectively  $u\in U_f$ is defined by
    \[\rho(\emptyset)=0,\quad\rho(u=[\tau_1,\dots,\tau_\kappa]_f)=\sum\limits_{i=1}^\kappa\rho(\tau_i),\]
    and
    \[\rho(\tau=[\tau_1,\dots,\tau_\kappa]_l)=\sum\limits_{i=1}^\kappa\rho(\tau_i)+
    \begin{cases}
        1&\text{for }l=0,\\
        \frac12&\text{otherwise}.
    \end{cases}\]
\end{defi}
\begin{table}[t]
\centering
\[
\begin{array}{c@{}ccrl}
  \tau & \rho(\tau) & \alpha(\tau) & \multicolumn{2}{c}{\varphi(\tau)(h)} \\ \hline \mbox{} \\
  \tnr{l} &
  \begin{cases}
    1 & \!\! \text{if } l=0 \\
    \frac{1}{2} & \!\!\text{if } l\not=0
  \end{cases}
  & 1 & W_l(h) & =
  \begin{cases}
    h     & \text{if} \; l=0  \\
    J_{(l)} & (\text{S}) \\ I_{(l)} & \text{(I)}
  \end{cases} \\[6mm]
  \begin{matrix}
  \dtree{\tnr{1}}{\dtree{\tnr{0}}{\tnr{2}}} \\
   \end{matrix}
  & 2 & 1 & \int_0^h \int_0^{s_1}
  W_2(s_2)\star d s_2 \star dW_1(s_1)&=
  \begin{cases}
    J_{(2,0,1)} & \text{(S)} \\ I_{(2,0,1)} & \text{(I)}
  \end{cases} \\[6mm]
  \dtree{\tnr{0}}{\tnl{1} \tnr{1}} & 2 & \frac{1}{2} & \int_0^h
  W_1(s)^2\star ds
   & =
  \begin{cases}
    2J_{(1,1,0)}
    & (\text{S}) \\
     2I_{(1,1,0)} + I_{(0,0)}
     & (\text{I})
  \end{cases} \\[6mm]
  \raisebox{-3ex}{\dtree{\tnr{0}}{\tnl{1} \dtree{\tnr{1}}{\tnl{2} \tnr{2}}}}
  & 3 & \frac{1}{2} &
  \multicolumn{2}{l}{
  \int_{0}^h W_1(s_1)\left(\int_{0}^{s_1}W_2(s_2)^2\star
    dW_1(s_2)\right)\star ds_1 }\\ && \multicolumn{3}{r}{=
 \begin{cases}
    4J_{(2,2,1,1,0)}+2J_{(2,1,2,1,0)}+2J_{(1,2,2,1,0)}
     & (\text{S}) \\[2mm]
    4I_{(2,2,1,1,0)}+2I_{(2,1,2,1,0)}+2I_{(1,2,2,1,0)} & \\ +2I_{(0,1,1,0)}+2I_{(2,2,0,0)}+I_{(1,0,1,0)}+I_{(0,0,0)}
    & (\text{I})
 \end{cases}} \\ \hline
\end{array}
\]
\caption{\label{bsa2:tab:trees1} Examples of trees and corresponding
  functions $\rho(\tau)$, $\alpha(\tau)$, and $\varphi(\tau)$. The
  integrals $\varphi(\tau)$ are also expressed in terms of multiple
  integrals $J_{(\dots)}$ for the Stratonovich (S) and  $I_{(\dots)}$
  for the It\^{o} (I) cases, see \cite{kloeden99nso} for their
  definition. In bracket notation, the trees will be written as
  $\bullet_l$, $[[\bullet_2]_0]_1$, $[\bullet_1,\bullet_1]_0$, and
  $[\bullet_1,[\bullet_2,\bullet_2]_1]_0$, respectively.}
\end{table}
In Table \ref{bsa2:tab:trees1} some trees and the corresponding values for the functions $\rho$, $\alpha$, and $\varphi$ are presented.

To decide the weak order we will also need the B--series of the function $f$, evaluated at the exact and the numerical solution.
From \thref{bsa2:thm:Bex,bsa2:thm:Bnum} and \thref{bsa2:lem:f_y} we obtain
\[
f(X(t_0+h))=\sum_{u\in U_f}\beta(u)\cdot\psi_\varphi(u)(h)\cdot G(u)(x_0),
\]
\[
f(Y_1)=\sum_{u\in U_f}\beta(u)\cdot\psi_\Phi(u)(h)\cdot G(u)(x_0),
\]
with \[\psi_\varphi([\emptyset]_f)\equiv1,\quad
\psi_\varphi(u=[\tau_1,\dots,\tau_\kappa]_f)=
\prod\limits_{j=1}^\kappa \varphi(\tau_j),
\]
and \[\psi_\Phi([\emptyset]_f)\equiv1,\quad
\psi_\Phi(u=[\tau_1,\dots,\tau_\kappa]_f)=
\prod\limits_{j=1}^\kappa \Phi(\tau_j).\]

One can show \cite{kloeden99nso,burrage99rkm,debrabant10ste} that
 $E\psi_{\varphi}(u)(h)=\mathcal{O}(h^{\rho(u)})$
$\forall u\in U_f$ and $\varphi(\tau)(h)=\mathcal{O}(h^{\rho(\tau)})$ $\forall\tau\in T$,
 respectively, where especially in the latter case the $\mathcal{O}(\cdot)$-notation refers to the $L^2(\Omega)$-norm and $h\to0$.

In the following we assume that also method \eqref{bsa2:eq:STM} is consistent with the definition of the tree order, i.\,e.\ that it is constructed as usual such that
 $E\psi_{\Phi}(u)(h)=\mathcal{O}(h^{\rho(u)})$
$\forall u\in U_f$ and $\Phi(\tau)(h)=\mathcal{O}(h^{\rho(\tau)})$
$\forall\tau\in T$, respectively. These conditions are fulfilled if for $\tau\in T$ and $k\in\N=\{0,1,\dots\}$ it holds that
$(\Phi_{ex}(\tau))^{2^k}=\mathcal{O}(h^{2^k\rho(\tau)})$ and
$(\Phi_{im}(\tau))^{2^k}=\mathcal{O}(h^{2^k\rho(\tau)})$.

\section{B--series of the iterated solution and growth functions}\label{bsa2:sec:iter}
In this section we will discuss how the iterated solution defined in
\eqref{bsa2:eq:SRKiter}  can be written in terms of B--series, that is
\[Y_{1,k} = B(\Phi_k,x_0;h).  \]

Assume that the predictor can be written as a B--series,
\[ Y_{1,0} = B(\Phi_0,x_0;h), \]
satisfying $\Phi_0(\emptyset)\equiv1$  and $\Phi_0(\tau)=\bO(h^{\rho(\tau)})$ $\forall\tau\in T$.
The most common situation is the use of the trivial predictor
$Y_{1,0}=x_0$, for
which $\Phi_{0}(\emptyset)\equiv1$ and $\Phi_{0}(\tau)\equiv0$
otherwise.

We are now ready to study each of the iteration schemes, which differ only in the choice of $J_k$ in \eqref{bsa2:eq:SRKiter}.
In each case, we will first find the recurrence formula for
$\Phi_{k}(\tau)$. From this we define a growth function
$\mathfrak{g}(\tau)$:
\begin{defi}[Growth function] \thlabel{bsa2:def:growth}
    A growth function $\frg:T \rightarrow \N$  is a function
    satisfying
    \begin{equation}
        \label{bsa2:eq:growth}
    \begin{aligned}
    \Phi_{k}(\tau) & = \Phi(\tau) \quad \forall \tau
    \in T \text{ with }\frg(\tau) \leq k  \\
    \qquad \Rightarrow &  \qquad
    \Phi_{k+1}(\tau)  = \Phi(\tau) \quad \forall \tau
    \in T\text{ with } \frg(\tau) \leq k + 1,
    \end{aligned}
    \end{equation}
    for all $k\geq 0$.
\end{defi}
This result should be sharp in the sense that in general there exists $\tau \not= \emptyset$ with
$\Phi_{0}(\tau)\not=\Phi(\tau)$ and $\Phi_{k}(\tau)\not=\Phi(\tau)$ when $k<\mathfrak{g}(\tau)$.
From \thref{bsa2:lem:f_y} we also have
    \[ f(Y_{1,k})=\sum_{u\in U_f}\beta(u)\cdot\psi_{\Phi_{k}}{(u)}\cdot G(u)(x_0)
    \]
    with
    \[\psi_{\Phi_{k}}{([\emptyset]_f)}\equiv1,\quad
   \psi_{\Phi_{k}}{(u=[\tau_1,\dots,\tau_\kappa]_f)} = \prod\limits_{j=1}^\kappa
\Phi_{k}{(\tau_j)},\]
where $\beta(u)$ and $G(u)(x_{0})$ are given in \thref{bsa2:lem:f_y}.
This implies
\begin{equation}
    \label{bsa2:eq:growthPrime}
        \psi_{\Phi_{k}}(u) = \psi_{\Phi(\tau)} \quad \forall
        u=[\tau_{1},\dots,\tau_{\kappa}]_{f} \in U_{f} \text{ with }
         \mathfrak{g}'(u)=\max_{j=1}^{\kappa}\mathfrak{g}(\tau_{i})
        \leq k.
\end{equation}
As we will see, the growth functions give a precise description of the development of the iterations. However, to get applicable results we will at the end need the relation between the growth functions and the order. These aspects are discussed in the next section. Examples of trees and the values of the growth functions for the three iteration schemes are given in Figure \ref{bsa2:fig:t_and_g}.
\begin{figure}[t]
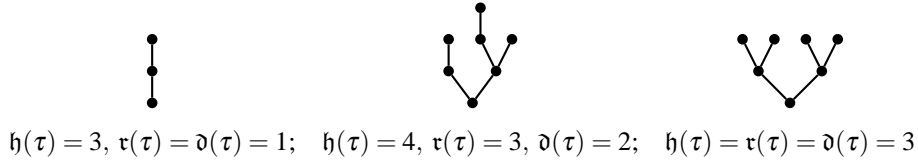

  \[
  \begin{array}{ccccc}
    \dtree{\tn}{\dtree{\tn}{\tn}}&
    \dtree{\tn}{\dtree{\tn}{\tn} \dtree{\tn}{\dtree{\tn}
      \tn \tn}} &
   \dtree{\tn}{\dtree{\tn}{\tn \tn} \dtree{\tn}{\tn \tn}}
    \\[2mm]
    \frh(\tau)=3,\; \frr(\tau)=\frd(\tau)=1;&
    \frh(\tau)=4,\;\frr(\tau)=3,\; \frd(\tau)=2; &
    \frh(\tau)=\frr(\tau)=\frd(\tau)=3
  \end{array}
  \]
  \caption{Examples of trees and their growth functions for simple
    ($\frh$), modified Newton ($\frr$) and full Newton ($\frd$) iterations. }
  \label{bsa2:fig:t_and_g}
\end{figure}
\subsection{The simple iteration}
Simple iterations are described by
\eqref{bsa2:eq:SRKiter}
with $J_k=0$, that is
\begin{equation} \label{bsa2:eq:simple}Y_{n+1,k+1}=B(\Phi_{ex},Y_n;h)+B(\Phi_{im},Y_{n+1,k};h).
\end{equation}
By \thref{bsa2:thm:comp} we easily get the following lemma, where, as in the following, all results are
valid for all $l=0,\dots,m$:
\begin{lem} \thlabel{bsa2:lem:B_H_iter} If $Y_{1,0} =
    B(\Phi_0,x_0;h)$ then $Y_{1,k} =
    B(\Phi_k,x_0;h)$, where
   \[
   \begin{aligned}
   \Phi_{k+1}(\emptyset) &\equiv 1, \\
   \Phi_{k+1}(\tau)&=\Phi_{ex}(\tau)+(\Phi_{k}\mlt\Phi_{im})(\tau).
   \end{aligned}
   \]
    The corresponding growth function is given by
   \[
   \mathfrak{h}(\emptyset)=0,\quad
   \mathfrak{h}(\bullet_{l})=1,\quad
   \mathfrak{h}([\tau_1,\dots,\tau_\kappa]_l)
   =1+\max\limits_{j=1}^\kappa\mathfrak{h}(\tau_j).
   \]
\end{lem}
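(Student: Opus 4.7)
The lemma has two parts, which I would treat in turn. For the recursion defining $\Phi_{k+1}$, I would argue by induction on the iteration index $k$. The base case $k=0$ is the hypothesis $Y_{1,0}=B(\Phi_0,x_0;h)$. For the inductive step, assume $Y_{1,k}=B(\Phi_k,x_0;h)$ with $\Phi_k(\emptyset)\equiv 1$. Substituting into \eqref{bsa2:eq:simple} with $n=0$ so that $Y_n=x_0$, the first summand is already the B--series $B(\Phi_{ex},x_0;h)$. For the second summand, since $\Phi_k(\emptyset)\equiv 1$, \thref{bsa2:thm:comp} applies and yields $B(\Phi_{im},B(\Phi_k,x_0;h);h)=B(\Phi_k\mlt\Phi_{im},x_0;h)$. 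Adding the two B--series and matching coefficients of each elementary differential $F(\tau)$ gives the stated recursion; in particular $\Phi_{k+1}(\emptyset)=\Phi_{ex}(\emptyset)+\Phi_k(\emptyset)\Phi_{im}(\emptyset)=1+0=1$, consistent with $\Phi_{k+1}(\emptyset)\equiv 1$.

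To verify that $\frh$ is a growth function I fix $k\ge 0$ and assume $\Phi_k(\tau)=\Phi(\tau)$ for every $\tau$ with $\frh(\tau)\le k$; the goal is to deduce $\Phi_{k+1}(\tau)=\Phi(\tau)$ for every $\tau$ with $\frh(\tau)\le k+1$. The case $\tau=\emptyset$ is immediate. For $\tau\neq\emptyset$, by the recursion of the first part together with the analogous recursion from \thref{bsa2:thm:Bnum} it suffices to prove $(\Phi_k\mlt\Phi_{im})(\tau)=(\Phi\mlt\Phi_{im})(\tau)$. Expanding both sides via \thref{bsa2:thm:comp} and discarding the $\vartheta=\emptyset$ summand, which vanishes since $\Phi_{im}(\emptyset)\equiv 0$, the two sums agree term by term as soon as $\Phi_k(\delta)=\Phi(\delta)$ for every $\delta$ appearing in a remainder multiset $\omega$ with $(\vartheta,\omega)\in ST(\tau)$ and $\vartheta\neq\emptyset$. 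By the standing assumption, this reduces to the combinatorial bound $\frh(\delta)\le\frh(\tau)-1\le k$.

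The main technical step is therefore to establish this bound: for every $\tau\neq\emptyset$ and every $(\vartheta,\omega)\in ST(\tau)$ with $\vartheta\neq\emptyset$, each $\delta\in\omega$ satisfies $\frh(\delta)\le\frh(\tau)-1$. I would prove it by structural induction on $\tau$. If $\tau=\bullet_l$ then necessarily $\vartheta=\bullet_l$, forcing $\omega=\emptyset$, so the statement is vacuous. Otherwise $\tau=[\tau_1,\dots,\tau_\kappa]_l$, $\vartheta=[\vartheta_1,\dots,\vartheta_\kappa]_l$, and $\omega$ is the union of the remainder multisets $\omega_i$ coming from $(\vartheta_i,\omega_i)\in ST(\tau_i)$. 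A given $\delta\in\omega$ lies in some $\omega_i$: if $\vartheta_i=\emptyset$ then $\delta=\tau_i$, so $\frh(\delta)=\frh(\tau_i)\le\frh(\tau)-1$ directly from the recursion defining $\frh$, while if $\vartheta_i\neq\emptyset$ the inner induction hypothesis applied to $\tau_i$ gives $\frh(\delta)\le\frh(\tau_i)-1\le\frh(\tau)-2$. Either way the bound holds, completing the argument.
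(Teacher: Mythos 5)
Your proof is correct and follows exactly the approach the paper intends. The paper only remarks that this lemma is "easily" obtained from the composition theorem (giving the detailed induction only for the modified Newton case in Lemma 3.4); your argument fills in those details in the same style — establish the recursion for $\Phi_{k+1}$ via \thref{bsa2:thm:comp}, then verify the growth-function property by induction on $k$, with the key combinatorial fact being that every remainder tree $\delta$ coming from a nonempty subtree $\vartheta$ satisfies $\frh(\delta)\le\frh(\tau)-1$.

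One minor imprecision worth noting: in the verification that $\Phi_{k+1}(\emptyset)\equiv 1$, you write $(\Phi_k\mlt\Phi_{im})(\emptyset)$ as $\Phi_k(\emptyset)\Phi_{im}(\emptyset)$. By the composition formula the value on $\emptyset$ is $\gamma(\emptyset,\emptyset,\emptyset)\,\Phi_{im}(\emptyset)=\Phi_{im}(\emptyset)=0$, so the product form is not literally the rule, though it gives the same number here since $\Phi_k(\emptyset)=1$. The conclusion is unaffected.
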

The function $\frh(\tau)$ is the height of $\tau$, that is the maximum
number of nodes along one branch.

\subsection{The modified Newton iteration}
In this subsection we consider the modified Newton iteration
\begin{equation} \label{bsa2:eq:modified}
  \begin{aligned}
  Y_{n+1,k+1}&=B(\Phi_{ex},Y_n;h)+B(\Phi_{im},Y_{n+1,k};h) \\
  &+ \partial_2B(\Phi_{im},Y_n;h)(Y_{n+1,k+1}-Y_{n+1,k}).
  \end{aligned}
\end{equation}
The B--series for  $Y_{1,k}$ and the corresponding growth function can now be described by the following lemma:
\begin{lem} \thlabel{bsa2:lem:B_mod}
If $Y_{1,0} =  B(\Phi_0,x_0;h)$ then $Y_{1,k} = B(\Phi_k,x_0;h)$
with
\begin{equation} \label{bsa2:eq:lmod}
  \begin{aligned}
    \Phi_{k+1}(\emptyset)&\equiv1, \\
    \Phi_{k+1}(\tau)&=\Phi_{ex}(\tau)+(\Phi_{k}\mlt\Phi_{im})(\tau)+
    ((\Phi_{k+1}-\Phi_{k})\mltl\Phi_{im})(\tau).
  \end{aligned}
\end{equation}
The corresponding growth function is given by
\begin{equation*}
    \mathfrak{r}(\emptyset) =0, \qquad  \mathfrak{r}(\bullet_{l})=1,
     \qquad
    \mathfrak{r}(\tau=[\tau_{1},\dots,\tau_{\kappa}]_{l}) =
    \begin{cases}
        \mathfrak{r}(\tau_{1}) & \text{if} \quad \kappa=1, \\
        \displaystyle 1 + \max_{j=1}^{\kappa}\,
        \mathfrak{r}(\tau_{j}) & \text{if}
        \quad \kappa \geq 2.
    \end{cases}
\end{equation*}
\end{lem}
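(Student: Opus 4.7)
The proof separates into (i) establishing the implicit recurrence \eqref{bsa2:eq:lmod} for $\Phi_{k+1}$ and (ii) verifying that $\mathfrak{r}$ is a growth function.

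For part (i) I would mirror the proof of \thref{bsa2:thm:Bnum}. Inducting on $k$, assume $Y_{1,k}=B(\Phi_k,x_0;h)$ and substitute into \eqref{bsa2:eq:modified}. With $Y_n=x_0$ at the first step, $B(\Phi_{ex},Y_n;h)$ is already a B--series in $x_0$; \thref{bsa2:thm:comp} rewrites $B(\Phi_{im},Y_{1,k};h)$ as $B(\Phi_k\mlt\Phi_{im},x_0;h)$; and since the increment $Y_{1,k+1}-Y_{1,k}$ is a B--series whose empty-tree coefficient vanishes, \thref{bsa2:lem:Bfprime} rewrites $\partial_2B(\Phi_{im},x_0;h)(Y_{1,k+1}-Y_{1,k})$ as $B((\Phi_{k+1}-\Phi_k)\mltl\Phi_{im},x_0;h)$. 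Matching coefficients tree by tree yields \eqref{bsa2:eq:lmod}. The resulting implicit equation for $\Phi_{k+1}(\tau)$ is well defined because for every $(\vartheta,\{\delta\})\in SP(\tau)$ with $\Phi_{im}(\vartheta)\neq0$ one has $\vartheta\neq\emptyset$, forcing $|\delta|<|\tau|$, so the recursion closes on smaller trees.

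For part (ii) I would use a double induction: outer on $k$, assuming $\Phi_k(\sigma)=\Phi(\sigma)$ whenever $\mathfrak{r}(\sigma)\leq k$, and inner on $|\tau|$. The bases $\tau\in\{\emptyset,\bullet_l\}$ are immediate. For $\tau=[\tau_1,\ldots,\tau_\kappa]_l$ with $\mathfrak{r}(\tau)\leq k+1$, write $\Delta_j:=\Phi_j-\Phi$ and subtract \eqref{bsa2:eq:lmod} from the recurrence of \thref{bsa2:thm:Bnum}. Expanding
\[
\prod_{\delta\in\omega}\Phi_k(\delta)-\prod_{\delta\in\omega}\Phi(\delta)=\sum_{\emptyset\neq S\subseteq\omega}\prod_{\delta\in S}\Delta_k(\delta)\prod_{\delta\in\omega\setminus S}\Phi(\delta)
\]
multilinearly, and using the easy monotonicity lemma that any proper subtree $\sigma$ of $\tau$ satisfies $\mathfrak{r}(\sigma)\leq\mathfrak{r}(\tau)$, the outer hypothesis forces $\Delta_k(\delta)=0$ for every $\delta$ with $\mathfrak{r}(\delta)\leq k$, leaving only contributions with $\mathfrak{r}(\delta)=k+1$.

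The combinatorial heart of the argument is to show that the proper subtrees $\delta$ of $\tau$ with $\mathfrak{r}(\delta)=\mathfrak{r}(\tau)$ form a nested chain descending the unary ``trunk'' from the root: they are absent whenever $\kappa\geq2$ (since then $\mathfrak{r}(\tau)=1+\max_j\mathfrak{r}(\tau_j)$ strictly exceeds the value of $\mathfrak{r}$ on any subtree of any $\tau_j$) and reduce to $\tau_1\supset\tau_{11}\supset\cdots$ when $\kappa=1$. Since elements of any $\omega\in ST(\tau)$ are pairwise disjoint full subtrees of $\tau$, at most one trunk subtree can appear in $\omega$; moreover, if one does, then $\omega=\{\delta\}$, because pulling a trunk subtree leaves only a unary chain above it, which admits no further disjoint pullings. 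Hence in the expansion only single-factor terms ($|S|=1$) with $\delta$ on the trunk contribute, and they are exactly indexed by pairs in $SP(\tau)$. Combining with the Newton correction $((\Phi_{k+1}-\Phi_k)\mltl\Phi_{im})(\tau)$, which is also indexed by $SP(\tau)$ with the same weights $\gamma(\tau,\vartheta,\{\delta\})$, the $\Delta_k(\delta)$ terms cancel and one arrives at
\[
\Delta_{k+1}(\tau)=\sum_{(\vartheta,\{\delta\})\in SP(\tau)}\gamma(\tau,\vartheta,\{\delta\})\,\Phi_{im}(\vartheta)\,\Delta_{k+1}(\delta).
\]
Each contributing pair has $\vartheta\neq\emptyset$ and $|\delta|<|\tau|$ with $\mathfrak{r}(\delta)\leq k+1$, so the inner hypothesis gives $\Delta_{k+1}(\delta)=0$ and consequently $\Phi_{k+1}(\tau)=\Phi(\tau)$. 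The main obstacle is precisely this combinatorial identification of the trunk and the verification that a trunk pull precludes any further disjoint pull; both rest on matching the peculiar definition of $\mathfrak{r}$ (with a unary root not increasing its value) against the recursive structure of $ST(\tau)$.
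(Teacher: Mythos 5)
Your proposal is correct and follows essentially the same route as the paper: part (i) by Theorems on composition and on $\partial_2 B$, part (ii) by a double induction (on $k$, then on the number of nodes) using the observation that every element of a remainder multiset $\omega$ of size $\geq 2$ has strictly smaller $\mathfrak{r}$-value than $\tau$. Your ``trunk'' argument makes explicit the combinatorial step that the paper's proof states only tersely (the paper writes $\mathfrak{r}(\delta)\leq\mathfrak{r}(\tau)$ where the strict inequality $\mathfrak{r}(\delta)<\mathfrak{r}(\tau)$ for $(\vartheta,\omega)\in ST(\tau)\setminus SP(\tau)$ is what is actually needed to invoke the outer induction hypothesis), so your write-up is a valid and somewhat more careful presentation of the same proof.
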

The function $\mathfrak{r}(\tau)$ is one plus the maximum number of
ramifications along any branch of the tree.
\begin{proof}
\thref{bsa2:thm:comp,bsa2:lem:Bfprime} imply \eqref{bsa2:eq:lmod}.
We next prove that $\mathfrak{r}$ is the appropriate growth function.
If $\mathfrak{r}(\tau)=0$ then $\tau=\emptyset$ and
$\Phi_0(\tau)=\Phi(\tau)$. Assume now that $\Phi_k(\tau)=\Phi(\tau)$ $\forall\tau$ with $\mathfrak{r}(\tau)\leq k$.
If $(\vartheta,\omega)\in
ST(\tau)\setminus SP(\tau)$ with $\mathfrak{r}(\tau)\leq k+1$, then $\forall\delta\in\omega$
it holds $\mathfrak{r}(\delta)\leq\mathfrak{r}(\tau)$ and therefore
$\Phi_{k}(\delta)=\Phi(\delta)$. So, by \eqref{bsa2:eq:lmod} we have
$\forall\tau$ with $\mathfrak{r}(\tau)\leq k+1$
\[
  \begin{aligned}
\Phi_{k+1}(\tau)&= \Phi_{ex}(\tau)+\sum_{(\vartheta,\omega)\in ST(\tau)\setminus SP(\tau)}\gamma(\tau,\vartheta,\omega) \cdot\Phi_{im}(\vartheta)\prod_{\delta\in\omega}\Phi(\delta)
\\&+\sum_{(\vartheta,\{\delta\})\in SP(\tau)}\gamma(\tau,\vartheta,\{\delta\}) \cdot\Phi_{im}(\vartheta)\Phi_{k+1}(\delta),
    \end{aligned}
\]
and by induction on the number of nodes of these trees we obtain that $\Phi_{k+1}(\tau)=\Phi(\tau)$ $\forall\tau$ with $\mathfrak{r}(\tau)\leq k+1$.
\end{proof}
\subsection{The full Newton iteration}
In this subsection we consider the full Newton iteration
\eqref{bsa2:eq:SRKiter} with $J_k = \partial_2B(\Phi_{im},Y_{1,k};h)$.
Extending the $\mltl$-operator to the case when its first operand does not vanish on the empty tree by
\[(\phi_x\mltl\phi_y)(\tau)=((\phi_x-\phi_x(\emptyset)e)\mltl\phi_y)(\tau)+\phi_x(\emptyset)\phi_y(\tau),\] it follows that the B--series for $Y_{1,k}$ and the corresponding growth function satisfy:
\begin{lem}\thlabel{bsa2:lem:B_fullNewton}
If $Y_{1,0} =
    B(\Phi_0,x_0;h)$ then $Y_{1,{k+1}} =
    B(\Phi_{k+1},x_0;h)$
         with
    \begin{equation}
    \begin{aligned}
\Phi_{k+1}(\emptyset)&\equiv 1,\\
\Phi_{k+1}(\tau)&= \Phi_{ex}(\tau)+\left(\Phi_k\mlt\left(\left(\Phi_k^{-1}\mlt\Phi_{k+1}\right)\mltl\Phi_{im}\right)\right)(\tau).
    \end{aligned}\label{bsa2:eq:phifullNewton}
    \end{equation}
The corresponding growth function is given by
     \[
\begin{aligned}
    \mathfrak{d}(\emptyset) &= 0, \qquad  \mathfrak{d}(\bullet_{l})=
    1, \\
    \mathfrak{d}(\tau=[\tau_{1},\dots,\tau_{\kappa}]_{l})& =
    \begin{cases}
        \max_{j=1}^{\kappa} \mathfrak{d}(\tau_{j}) & \text{if} \quad
        \gamma  = 1, \\
        \max_{j=1}^{\kappa} \mathfrak{d}(\tau_{j}) +1& \text{if} \quad
        \gamma  \geq 2,
    \end{cases}
\end{aligned}
\]
where $\gamma$ is the number of trees in $\tau_1,\dots,\tau_\kappa$ satisfying
$\mathfrak{d}(\tau_{i}) = \max_{j=1}^{\kappa} \mathfrak{d}(\tau_{j})$.
\end{lem}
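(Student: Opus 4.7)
The plan is to adapt the strategy from the proofs of Lemmas~\thref{bsa2:lem:B_H_iter} and~\thref{bsa2:lem:B_mod}, the new feature being that the Jacobian is now evaluated at the current iterate $Y_{1,k}$ rather than at $x_0$. The key algebraic device is the update weight function $\Xi_{k+1}:=\Phi_k^{-1}\mlt\Phi_{k+1}$, which by the Butcher-group structure \eqref{bsa2:eq:id_element} satisfies $\Xi_{k+1}(\emptyset)\equiv 1$ and, by associativity \eqref{bsa2:eq:ass_z}, $\Phi_k\mlt\Xi_{k+1}=\Phi_{k+1}$. Applying \thref{bsa2:thm:comp} to the latter identity gives $B(\Xi_{k+1},Y_{1,k};h)=Y_{1,k+1}$, and hence
\[
Y_{1,k+1}-Y_{1,k}=B(\Xi_{k+1}-e,\,Y_{1,k};h).
\]
Since $\Xi_{k+1}-e$ vanishes on $\emptyset$, this is exactly the form required by \thref{bsa2:lem:Bfprime}.

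To derive the recursion \eqref{bsa2:eq:phifullNewton}, I would rewrite each summand of the full Newton update as a B--series based at $x_0$. The explicit part is immediate. For the implicit part plus Jacobian correction, \thref{bsa2:lem:Bfprime} transforms the Jacobian term into $B((\Xi_{k+1}-e)\mltl\Phi_{im},Y_{1,k};h)$; adding $B(\Phi_{im},Y_{1,k};h)$ and using the extended definition of $\mltl$ collapses the pair to $B(\Xi_{k+1}\mltl\Phi_{im},Y_{1,k};h)$. A second application of \thref{bsa2:thm:comp} rebases this at $x_0$ as $B(\Phi_k\mlt(\Xi_{k+1}\mltl\Phi_{im}),x_0;h)$, and identifying coefficients of $F(\tau)$ on both sides of \eqref{bsa2:eq:SRKiter} produces \eqref{bsa2:eq:phifullNewton}. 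The condition $\Phi_{k+1}(\emptyset)\equiv 1$ follows automatically, since $(\Xi_{k+1}\mltl\Phi_{im})(\emptyset)=\Phi_{im}(\emptyset)=0$ and $\Phi_{ex}(\emptyset)\equiv 1$.

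For the growth function $\frd$ I would run a double induction: outer on $k$ with hypothesis $\Phi_k(\tau)=\Phi(\tau)$ whenever $\frd(\tau)\leq k$, and inner on the number of nodes of $\tau$. The cases $\tau=\emptyset$ and $\tau=\bullet_l$ are direct computations, using that $(\Xi_{k+1}\mltl\Phi_{im})(\bullet_l)=\Phi_{im}(\bullet_l)$ because the only pair in $SP(\bullet_l)$ carries a factor $\Phi_{im}(\emptyset)=0$. An auxiliary fact I would establish en route is that $\Xi_{k+1}(\delta)=0$ for every $\delta\neq\emptyset$ with $\frd(\delta)\leq k$ and $|\delta|<|\tau|$: by the outer and inner induction hypotheses, $\Phi_k$ and $\Phi_{k+1}$ both agree with $\Phi$ on $\delta$ and on all its rooted subtrees, so $\Xi_{k+1}=\Phi_k^{-1}\mlt\Phi_{k+1}$ coincides with $\Phi_k^{-1}\mlt\Phi_k=e$ on $\delta$. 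Consequently, in the expansion of $(\Phi_k\mlt(\Xi_{k+1}\mltl\Phi_{im}))(\tau)$ via \thref{bsa2:thm:comp}, every splitting whose remainder trees all have $\frd\leq k$ reproduces the corresponding term of $(\Phi\mlt\Phi_{im})(\tau)$ from \thref{bsa2:thm:Bnum}. In the $\gamma\geq 2$ branch of the $\frd$-recursion, all children $\tau_i$ satisfy $\frd(\tau_i)\leq k$, so every remainder tree lies in this regime and $\Phi_{k+1}(\tau)=\Phi(\tau)$ follows immediately.

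The main obstacle is the $\gamma=1$ branch, in which one ``critical'' child $\tau_{j^*}$ has $\frd(\tau_{j^*})=k+1$ and splittings can expose a remainder $\delta$ with $\frd(\delta)=k+1$, on which the outer induction hypothesis fails. The point is that such a critical $\delta$ can appear at most once per splitting; I would verify this by a subinduction on the $\frd$-structure inside $\tau_{j^*}$ (either $\gamma=1$ recurses into a strictly smaller critical subtree, or $\gamma\geq 2$ forces all strict subtrees to $\frd\leq k$). Its single contribution then sits inside the $SP$-part of $\Xi_{k+1}\mltl\Phi_{im}$, where the factor $\Xi_{k+1}(\delta)$, combined with $\Phi_k(\delta)$ through $\Phi_{k+1}=\Phi_k\mlt\Xi_{k+1}$ and the inner induction hypothesis $\Phi_{k+1}(\delta)=\Phi(\delta)$, effectively replaces the erroneous $\Phi_k(\delta)$ by the correct $\Phi(\delta)$. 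The delicate bookkeeping is checking that the combinatorial coefficients $\gamma(\tau,\vartheta,\omega)$ and $\gamma(\tau,\vartheta,\{\delta\})$ from the two expansions pair up exactly so that no residual term survives. Sharpness of $\frd$ is then confirmed on explicit small trees such as the binary-branching example with $\frd=3$ in Figure~\ref{bsa2:fig:t_and_g}.
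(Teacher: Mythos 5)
Your derivation of the recursion \eqref{bsa2:eq:phifullNewton} is essentially the paper's: both introduce the update weight $\Phi_k^{-1}\mlt\Phi_{k+1}$, rebase the iterate via $x_0=B(\Phi_k^{-1},Y_{1,k};h)$, apply \thref{bsa2:lem:Bfprime} for the Jacobian term, and recombine using the extended $\mltl$ and right-linearity of $\mlt$; the only difference is cosmetic (you fold $\Phi_{im}+(\Xi_{k+1}-e)\mltl\Phi_{im}$ directly into $\Xi_{k+1}\mltl\Phi_{im}$, the paper instead splits off and cancels $\Phi_k\mlt\Phi_{im}$).

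For the growth-function part there is a genuine gap. You correctly identify the two key facts --- that $\Xi_{k+1}$ vanishes on every $\delta\neq\emptyset$ with $\frd(\delta)\leq k$ (by the induction hypotheses applied to $\Phi_k$ and $\Phi_{k+1}$), and that any splitting of a tree $\tau$ with $\frd(\tau)=k+1$ can expose at most one remainder with $\frd=k+1$. But you then propose a direct, term-by-term comparison of the coefficients in $(\Phi_k\mlt(\Xi_{k+1}\mltl\Phi_{im}))(\tau)$ against $(\Phi\mlt\Phi_{im})(\tau)$ and explicitly defer the ``delicate bookkeeping''; that bookkeeping is precisely what needs to be proved, so as written the $\gamma=1$ branch is not established. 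The paper's device avoids the bookkeeping entirely: it shows the \emph{operator-level} identity $(\Psi_k\mlt\Phi_{im})(\tau)=(\Psi_k\mltl\Phi_{im})(\tau)$ for every $\tau$ with $\frd(\tau)\leq k+1$ (since in $\Psi_k\mlt\Phi_{im}$ every $(\vartheta,\omega)\in ST(\tau)\setminus SP(\tau)$ with $\omega\neq\emptyset$ has some $\delta\in\omega$ of doubling index $\leq k$, killing that term), substitutes this into \eqref{bsa2:eq:phifullNewton}, and then collapses $\Phi_k\mlt\bigl((\Phi_k^{-1}\mlt\Phi_{k+1})\mlt\Phi_{im}\bigr)$ to $\Phi_{k+1}\mlt\Phi_{im}$ by associativity \eqref{bsa2:eq:ass_z}. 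This turns the recursion on the relevant trees into $\Phi_{k+1}(\tau)=\Phi_{ex}(\tau)+(\Phi_{k+1}\mlt\Phi_{im})(\tau)$, i.\,e.\ literally the defining recursion \eqref{bsa2:eq:B_iter_Y} for $\Phi$, so the node-count induction closes with no coefficient matching whatsoever. If you replace your hand comparison by this operator identity plus associativity, your sketch becomes a complete proof and in fact coincides with the paper's.
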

The function $\frd$ is called the doubling index of $\tau$.
\begin{proof}
  Writing the iterations in terms of B--series, we get
  \begin{equation}\label{bsa2:eq:fullBiter}
  B(\Phi_{k+1},x_0;h)=B(\Phi_{ex},x_0;h)+B(\Phi_{im},Y_{1,k};h)
                     +\partial_2 B(\Phi_{im},Y_{1,k};h)B(\Delta\Phi_k,x_0; h)
  \end{equation}
  with $\Delta \Phi_k(\tau) = \Phi_{k+1}(\tau)-\Phi_k(\tau)$.
  Let $x_0=B(\Phi^{-1}_{k},Y_{1,k};h)$ so that
  \[
  B(\Delta\Phi_k,x_0;h)=B(\Delta\Phi_k,B(\Phi^{-1}_{k},Y_{1,k};h);h) =
  B(\Phi^{-1}_k\mlt \Delta\Phi_k,Y_{1,k};h).
  \]
  The use of \thref{bsa2:lem:Bfprime} followed by the use of \thref{bsa2:thm:comp}
  give the following result:
 \begin{align*}
  \partial_2B(\Phi_{im},Y_{1,k}; h)B(\Phi^{-1}_k\mlt\Delta\Phi_k,Y_{1,k};
  h) &= B((\Phi^{-1}_k\mlt\Delta\Phi_k)\mltl\Phi_{im},Y_{1,k}; h) \\
  &=
  B(\Phi_{k}\mlt((\Phi^{-1}_k\mlt\Delta\Phi_k)\mltl\Phi_{im}),x_0; h).
  \end{align*}
  The operator $\mltl$ is bilinear and $\mlt$ is linear from the right,
  thus
  \[\Phi_{k}\mlt((\Phi^{-1}_k\mlt\Delta\Phi_k)\mltl\Phi_{im}) =
  \Phi_{k}\mlt((\Phi^{-1}_k\mlt\Phi_{k+1})\mltl\Phi_{im})-\Phi_k\mlt\Phi_{im}\]
  and the first part of the theorem is proven by \eqref{bsa2:eq:fullBiter}.
  We will now prove the
  second part. Assume that $\Phi_{k}(\tau)=\Phi_{k+1}(\tau)=\Phi(\tau)$ for all $\tau$
  satisfying $\frd(\tau)\leq k$. This is  true for $k=0$ and
  $\tau=\emptyset$. Let $\Psi_k = \Phi_{k}^{-1}\mlt\Phi_{k+1}$
  and notice that by the assumption above, $\Psi_k(\tau)$ equals the unit element $e(\tau)$ if $\frd(\tau)\leq k$. Consider a tree $\tau$ where $\frd(\tau)=k+1$. For this tree we obtain
\begin{align} \nonumber
   (\Psi_k\mlt\Phi_{im})(\tau)&=
    \sum_{(\vartheta,\{\delta\})\in SP(\tau)}
    \gamma(\tau,\vartheta,\{\delta\})\Phi_{im}(\vartheta)
    \Psi_k(\delta)\\&
    +\sum_{(\vartheta,\omega)\in ST(\tau)\setminus
    SP(\tau)}\gamma(\tau,\vartheta,\omega)
    \Phi_{im}(\vartheta)\prod_{\delta\in\omega}\Psi_k(\delta)
    = (\Psi_k\mltl\Phi_{im})(\tau)
    \label{bsa2:eq:full_mult}
\end{align}
since the last sum of \eqref{bsa2:eq:full_mult} disappears: For each
    $(\vartheta,\omega)\in ST(\tau)\setminus SP(\tau)$ (if any) there is at
    least one $\delta\in\omega$ satisfying $\frd(\delta)\leq k$ and thereby
    $\Psi_k(\delta)=0$. In this case we obtain
     \[ (\Phi_k\mlt((\Phi_k^{-1}\mlt\Phi_{k+1})\mltl\Phi_{im}))(\tau) =
     (\Phi_{k}\mlt\Phi_{k}^{-1}\mlt\Phi_{k+1}\mlt\Phi_{im})(\tau) =
     (\Phi_{k+1}\mlt\Phi_{im})(\tau)\]
    by \eqref{bsa2:eq:ass_z}, so that
    \[
    \Phi_{k+1}(\tau)=\Phi_{ex}(\tau)+(\Phi_{k+1}\mlt\Phi_{im})(\tau). \]
    The theorem is completed by induction on the number of nodes of $\tau$ and on $k$.
\end{proof}

\section{General convergence results for iterated methods}\label{bsa2:sec:generalconvergenceresults}
Now we will relate the results of the previous section to the order of the overall scheme. We have weak consistency of order $p$ if and only if
\begin{equation}
    \label{bsa2:eq:T_eq}
    \E\psi_\Phi(u)(h)=\E\psi_\varphi(u)(h)+\bO(h^{p+1})\quad\forall u\in U_f\text{ with }\rho(u)\leq p+\frac12
\end{equation}
(\eqref{bsa2:eq:T_eq} slightly weakens conditions given in \cite{roessler06rta}), and
mean square global order $p$ if \cite{burrage04isr}
\begin{eqnarray*}
\Phi(\tau)(h)&=&\varphi(\tau)(h)+\bO(h^{p+\frac12})\quad\forall\tau\in T\text{ with }\rho(\tau)\leq p,\\
\E\Phi(\tau)(h)&=&\E\varphi(\tau)(h)+\bO(h^{p+1})\quad\forall\tau\in T\text{ with }\rho(\tau)\leq p+\frac12,
\end{eqnarray*}
and all elementary differentials $F(\tau)$ fulfill a linear growth condition. Instead of the last requirement it is also enough to claim that there exists a constant $C$ such that $\|g_j'(y)\|\leq C\quad\forall y\in\real^m$, $j=0,\dots,M$, and all necessary partial derivatives exist \cite{burrage00oco}.

Then, the order of
the iterated solution after $k$ iterations is $q_{k}$ if
\begin{equation}\label{bsa2:eq:iterWeakOrdCond} \E \psi_{\Phi_k}(u) = \E \psi_{\varphi}(u) \quad \forall u\in U_{f}\text{ with }
\rho(u) \leq q_{k}+\frac{1}{2} \end{equation}
in the weak convergence case respectively
\begin{equation}\label{bsa2:eq:iterStrOrdCond}
    \begin{aligned}
        \Phi_{k}(\tau)&=\varphi(\tau)\quad\forall\tau\in T\text{
            with
     }\rho(\tau)\leq q_{k},\\
        \E\Phi_{k}(\tau)&=\E\varphi(\tau)\quad\forall\tau\in T\text{ with }\rho(\tau)=q_{k}+\frac12
    \end{aligned}
\end{equation}
in the mean square convergence case.

In the following, we assume that the predictors satisfy the condition
\begin{equation} \label{bsa2:eq:cond_pred}
    \begin{aligned}
        \Phi_0(\tau)&= \Phi(\tau)
        \quad&\forall\tau\in T\text{ with }\mathfrak{g}(\tau)\leq
        \mathcal{G}_{0},
    \end{aligned}
\end{equation}
where $\mathcal{G}_{0}$
 is chosen as large
as possible. In particular, the trivial predictor satisfies
$\mathcal{G}_{0}=0$.

    It follows from \eqref{bsa2:eq:growth} and \eqref{bsa2:eq:growthPrime}
that
\begin{equation} \label{bsa2:eq:growthH}
    \begin{aligned}
        \Phi_k(\tau)&= \Phi(\tau) &&\forall\tau\in T\text{ with
        }\mathfrak{g}(\tau)\leq
        \mathcal{G}_{0}+k, \\
    \end{aligned}
\end{equation}
as well as
\begin{equation} \label{bsa2:eq:growthPhi}
    \begin{aligned}
        \psi_{\Phi_k}(u)&= \psi_{\Phi}(u) \quad&\forall u\in
        U_{f}\text{ with }\mathfrak{g}'(u)\leq
        \mathcal{G}_{0}+k.
    \end{aligned}
\end{equation}

The next step is to establish the relation between the order and the
growth function of a tree. We have chosen to do so by a maximum
growth function, given by
\begin{equation} \label{bsa2:eq:Gf}
    \begin{aligned}
      \mathcal{G}(q) &= \max_{\tau\in T}\left\{
      \mathfrak{g}(\tau) : \rho(\tau)\leq q \right\}= \max_{u\in U_f}\left\{
          \mathfrak{g}'(u) : \rho(u)\leq q\right\}.
    \end{aligned}
\end{equation}
With this definition, by \eqref{bsa2:eq:growthPhi} respectively \eqref{bsa2:eq:growthH},
the conditions \eqref{bsa2:eq:iterWeakOrdCond} respectively \eqref{bsa2:eq:iterStrOrdCond} are
 fulfilled for all $u$ of order $\rho(u)\leq
\min{(q_{k},p)}$ respectively all $\tau$ of order \mbox{$\rho(\tau)\leq
\min{(q_{k},p)}$} if
\begin{equation}\label{bsa2:eq:iterconvcond2}
        \mathcal{G}(q_{k}+\frac{1}{2}) \leq {\mathcal{G}}_{0}+k.
\end{equation}
Let $T^{S}\subset T$ and $U_{f}^{S} \subset U_{f}$ be the set of trees with an even number of each kind of stochastic nodes.
E.\,g.\ from \cite{debrabant10ste} we have
\begin{equation}
    \label{bsa2:eq:Eis0}
    \begin{aligned}
    \E \varphi(\tau) = 0 \quad &\text{if} \quad \tau \not\in T^{S},
    \\
    \E \psi_{\varphi}(u) = 0 \quad &\text{if} \quad u \not\in U_{f}^{S}.
    \end{aligned}
\end{equation}
Thus, if the method is as usual constructed such that
also $\forall m,n\in\N$ and $\forall\tau_{1,i}\in T$, $i=1,\dots,m$, $\forall\tau_{2,j}\in T$, $j=1,\dots,n$,
\begin{equation}\label{bsa2:eq:iteraddcond}
\E\left(\prod_{i=1}^m\prod_{j=1}^n\Phi_{ex}(\tau_{1,i})\Phi_{im}(\tau_{2,j})\right)=0\qquad\text{if}\qquad\sum_{i=1}^m\rho(\tau_{1,i})+\sum_{j=1}^n\rho(\tau_{2,i})\notin\N,
\end{equation}
then in \eqref{bsa2:eq:iterconvcond2} $q_k+\frac12$ can be replaced by $\lfloor q_k+\frac12\rfloor.$

The results can then be summarized in the following theorem:
\begin{theo}\thlabel{bsa2:lem:C}\sloppypar
If \eqref{bsa2:eq:iteraddcond} is fulfilled, then the iterated method is of weak respectively mean square
     order
    $q_{k}\leq p$ after
$
        \mathcal{G}(\lfloor q_{k}+\frac{1}{2}\rfloor)- {\mathcal{G}}_{0}
$
    iterations, otherwise
after
\mbox{$\mathcal{G}(q_{k}+\frac{1}{2})- {\mathcal{G}}_{0}$}
    iterations.
\end{theo}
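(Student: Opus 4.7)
The plan is to assemble the pieces already developed in the section. First, I would combine the predictor assumption \eqref{bsa2:eq:cond_pred} with the defining property of the growth function \eqref{bsa2:eq:growth} by an easy induction on $k$: at $k=0$ we have $\Phi_0(\tau)=\Phi(\tau)$ for all $\tau$ with $\mathfrak{g}(\tau)\leq \mathcal{G}_0$, and the implication in \eqref{bsa2:eq:growth} propagates this to arbitrary $k$, yielding \eqref{bsa2:eq:growthH}. Via \eqref{bsa2:eq:growthPrime}, the analogous statement \eqref{bsa2:eq:growthPhi} for $\psi_{\Phi_k}$ follows for all forests $u\in U_f$ with $\mathfrak{g}'(u)\leq \mathcal{G}_0+k$.

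Next, I would match this against the order conditions for the iterated scheme. The mean-square conditions \eqref{bsa2:eq:iterStrOrdCond} (respectively the weak conditions \eqref{bsa2:eq:iterWeakOrdCond}) are inherited from the underlying method of order $p$ once one knows $\Phi_k=\Phi$ (resp.\ $\psi_{\Phi_k}=\psi_\Phi$) on all trees (forests) of order up to $q_k+1/2$, capped at $p$. Using the maximum growth function defined in \eqref{bsa2:eq:Gf}, this is guaranteed precisely when condition \eqref{bsa2:eq:iterconvcond2} holds, i.e.\ $\mathcal{G}(q_k+\tfrac12)\leq \mathcal{G}_0+k$. Solving for $k$ gives the second (general) clause of the theorem.

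For the refinement to the floor under hypothesis \eqref{bsa2:eq:iteraddcond}, I would argue as follows. By the recursive formula in \thref{bsa2:thm:Bnum}, each $\Phi(\tau)$ is a sum of products of factors $\Phi_{ex}(\cdot)$ and $\Phi_{im}(\cdot)$ whose orders add up to $\rho(\tau)$, and the same holds for the iterates $\Phi_k$ by \thref{bsa2:lem:B_H_iter,bsa2:lem:B_mod,bsa2:lem:B_fullNewton}. Hence \eqref{bsa2:eq:iteraddcond} forces $\E\Phi_k(\tau)=\E\Phi(\tau)=0$ for $\rho(\tau)\notin\N$, and likewise for the products $\psi_{\Phi_k}(u)$. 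On the exact side, every tree of half-integer order lies outside $T^S$ (an odd total count across stochastic colours forces at least one color with an odd count), so by \eqref{bsa2:eq:Eis0} we also have $\E\varphi(\tau)=0$ and $\E\psi_\varphi(u)=0$ there. Thus the half-integer-order equalities in \eqref{bsa2:eq:iterStrOrdCond} and \eqref{bsa2:eq:iterWeakOrdCond} collapse to $0=0$ and may be dropped, leaving only the integer-order constraints, so that it suffices to require $\mathcal{G}(\lfloor q_k+\tfrac12\rfloor)\leq \mathcal{G}_0+k$.

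The main obstacle is the last step, specifically verifying carefully that \eqref{bsa2:eq:iteraddcond} really propagates through the (implicit) recursions for $\Phi_k$ in each of the three iteration schemes; apart from this, the theorem is a direct rearrangement of the inequality $\mathcal{G}(\cdot)\leq \mathcal{G}_0+k$ for $k$, combined with the observation that $q_k\leq p$ is needed because $\Phi_k=\Phi$ only guarantees consistency with the exact solution up to the order of the underlying method.
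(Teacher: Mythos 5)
Your proposal is correct and follows essentially the same approach as the paper, which in fact gives no separate proof block for this theorem: it is stated as a summary of the chain \eqref{bsa2:eq:cond_pred} $\Rightarrow$ \eqref{bsa2:eq:growthH}, \eqref{bsa2:eq:growthPhi} $\Rightarrow$ \eqref{bsa2:eq:iterconvcond2}, followed by the observation that \eqref{bsa2:eq:iteraddcond} together with \eqref{bsa2:eq:Eis0} lets one replace $q_k+\tfrac12$ by $\lfloor q_k+\tfrac12\rfloor$, which is exactly the chain you reconstruct. The propagation of \eqref{bsa2:eq:iteraddcond} through the (implicit) recursions for $\Phi_k$ that you flag as the delicate point is likewise left implicit in the paper, and is resolved as you suggest by induction on the number of nodes (the $\delta$ in $SP(\tau)$ is a proper subtree whenever $\vartheta\neq\emptyset$, and $\Phi_{im}(\emptyset)\equiv0$ kills the $\vartheta=\emptyset$ term).
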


Our next aim is to give explicit formulas for the maximum growth function.
 Let us start with the following lemma.
\begin{lem} \thlabel{bsa2:lem:GtoOrd}
    For $k \geq 1$,
    \[
    \begin{aligned}
        \mathfrak{h}(\tau)=k & \quad \Rightarrow \quad \rho(\tau)\geq
        \frac{k}{2},  \\
        \mathfrak{r}(\tau)=k & \quad \Rightarrow \quad \rho(\tau) \geq
        k-\frac12,
        \\
        \mathfrak{d}(\tau)=k & \quad \Rightarrow \quad \rho(\tau) \geq
        2^{k-1}-\frac12.
    \end{aligned}
    \]
The same result is valid for  $\mathfrak{h}'(u)$, $\mathfrak{r}'(u)$,
and $\mathfrak{d}'(u)$.
\end{lem}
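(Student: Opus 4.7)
The plan is to prove each of the three implications separately by induction on the value $k$ of the growth function, exploiting the recursive definitions of $\mathfrak{h}$, $\mathfrak{r}$, $\mathfrak{d}$ alongside that of $\rho$. In each case the base $k=1$ reduces to $\tau=\bullet_l$ or $\tau$ with a single linear branch, for which $\rho(\tau)\geq\tfrac12$; this matches the respective right-hand sides $\tfrac12$, $\tfrac12$, $\tfrac12$. For the inductive step I would split $\tau=[\tau_1,\dots,\tau_\kappa]_l$ according to the case distinction built into each growth function's recursion.

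For $\mathfrak{h}$ the argument is essentially trivial: if $\mathfrak{h}(\tau)=k$ then some $\tau_j$ satisfies $\mathfrak{h}(\tau_j)=k-1$, so by induction $\rho(\tau_j)\geq (k-1)/2$, and since the root of $\tau$ contributes at least $\tfrac12$ to $\rho(\tau)$, one gets $\rho(\tau)\geq\rho(\tau_j)+\tfrac12\geq k/2$. For $\mathfrak{r}$, when $\kappa=1$ we have $\mathfrak{r}(\tau)=\mathfrak{r}(\tau_1)=k$ and inductively $\rho(\tau)\geq\rho(\tau_1)+\tfrac12\geq k$; when $\kappa\geq 2$ some $\tau_j$ has $\mathfrak{r}(\tau_j)=k-1$ (so $\rho(\tau_j)\geq k-\tfrac32$ by induction) and at least one further $\tau_i$ contributes $\rho(\tau_i)\geq \tfrac12$, plus $\tfrac12$ for the root, yielding $\rho(\tau)\geq (k-\tfrac32)+\tfrac12+\tfrac12 = k-\tfrac12$.

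The main obstacle is the $\mathfrak{d}$ case, because the doubling increment only fires when at least two children simultaneously attain the maximum $\mathfrak{d}$-value; the bound $2^{k-1}-\tfrac12$ depends critically on counting these two contributions. If $\gamma=1$, say $\mathfrak{d}(\tau_j)=k$ for a unique $j$, then by induction $\rho(\tau_j)\geq 2^{k-1}-\tfrac12$ and hence $\rho(\tau)\geq 2^{k-1}$, which is stronger than required. If $\gamma\geq 2$, then at least two children $\tau_{j_1},\tau_{j_2}$ satisfy $\mathfrak{d}(\tau_{j_i})=k-1$, and the inductive hypothesis gives $\rho(\tau_{j_i})\geq 2^{k-2}-\tfrac12$ for $i=1,2$; summing these two and adding at least $\tfrac12$ for the root of $\tau$ yields
\[
\rho(\tau)\;\geq\; 2\bigl(2^{k-2}-\tfrac12\bigr)+\tfrac12 \;=\; 2^{k-1}-\tfrac12,
\]
closing the induction.

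Finally, the extension to $\mathfrak{h}'$, $\mathfrak{r}'$, $\mathfrak{d}'$ on $U_f$ is immediate from \eqref{bsa2:eq:growthPrime}: for $u=[\tau_1,\dots,\tau_\kappa]_f$ the growth function is defined as $\mathfrak{g}'(u)=\max_j\mathfrak{g}(\tau_j)$, so if $\mathfrak{g}'(u)=k$ there exists an index $j$ with $\mathfrak{g}(\tau_j)=k$; since the $f$-root contributes $0$ to $\rho$, one has $\rho(u)\geq\rho(\tau_j)$, and the already-established bound on $\rho(\tau_j)$ transfers verbatim to $\rho(u)$.
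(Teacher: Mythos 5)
Your proof is correct in substance, but it takes a somewhat different route from the paper. The paper identifies and recursively characterizes explicit sets $\Tau_{\frg,k}$ of \emph{minimal-order} trees attaining $\frg(\tau)=k$, and reads off their orders; this is slightly handwavy (``it is easy to show by induction on $k$'') but has the virtue of also exhibiting trees that \emph{achieve} the bound, which is precisely what the subsequent Corollary (the formula for $\mathcal G(q)$) relies on. Your argument instead proves the lower bound on $\rho(\tau)$ directly by a structural induction and never constructs the extremal trees, so it proves exactly the stated implication cleanly, but would need a small supplementary observation (that the bound is attained) to feed the Corollary.

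One technical correction to your write-up: you announce ``induction on the value $k$ of the growth function,'' but that is not the induction you actually carry out, and on its own it would not close the argument. In the $\kappa=1$ case for $\mathfrak{r}$ you invoke the bound for $\tau_1$ even though $\mathfrak{r}(\tau_1)=\mathfrak{r}(\tau)=k$, and likewise in the $\gamma=1$ case for $\mathfrak{d}$ you invoke it for a child $\tau_j$ with $\mathfrak{d}(\tau_j)=\mathfrak{d}(\tau)=k$; an induction on $k$ alone cannot be applied to a subtree with the \emph{same} $k$. The fix is trivial but should be stated: induct on the number of nodes of $\tau$ (or lexicographically on the pair $(k,\lvert\tau\rvert)$), so the inductive hypothesis is available for every proper subtree regardless of its growth-function value. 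With that restated, each of your three inductive steps — in particular the $\gamma\geq 2$ computation $2(2^{k-2}-\tfrac12)+\tfrac12=2^{k-1}-\tfrac12$ for $\mathfrak{d}$, which is indeed the delicate point — is correct, as is the transfer to $U_f$ via $\mathfrak g'(u)=\max_j\mathfrak g(\tau_j)$ and $\rho(u)=\sum_j\rho(\tau_j)\geq\rho(\tau_{j^*})$.
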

\begin{proof}
    Let $\Tau_{\frh,k}$, $\Tau_{\frr,k}$, and $\Tau_{\frd,k}$ be sets of trees of minimal order satisfying $\frh(\tau)=k$ $\forall\tau\in\Tau_{\frh,k}$,
    $\frr(\tau)=k$ $\forall\tau\in\Tau_{\frr,k}$, and $\frd(\tau)=k$
    $\forall\tau\in\Tau_{\frd,k}$ (see Figure \ref{bsa2:fig:minordtrees}), and denote this minimal order by
    $\rho_{\frh,k}$,  $\rho_{\frr,k}$, and  $\rho_{\frd,k}$.
    Minimal order trees are build up only by stochastic nodes. It follows immediately that
    $\Tau_{\frh,1}=\Tau_{\frr,1}=\Tau_{\frd,1}=\{\bullet_{l}:~l\geq1\}$. Since
    $\rho(\bullet_{l})=1/2$ for $l\geq 1$, the results are proved for $k=1$. It is
    easy to show by induction on $k$ that
    \begin{equation} \label{bsa2:eq:MinOrd}
    \begin{aligned}
        \Tau_{\frh,k}&=\{[\tau]_{l}:~\tau\in\Tau_{\frh,k-1},~l\geq1\},\quad
        &\rho_{\frh,k}&=\rho_{\frh,k-1}+\frac{1}{2} =
        \frac{k}{2}, \\
            \Tau_{\frr,k}&=\{[\bullet_{l_1},\tau]_{l_2}:~\tau\in\Tau_{\frr,k-1},~l_1,l_2\geq1\}, \quad &
        \rho_{\frr,k}&=\rho_{\frr,k-1}+1 =
        k-\frac{1}{2}, \\
        \Tau_{\frd,k}&=\{[\tau_1,\tau_2]_{l}:~\tau_1,\tau_2\in\Tau_{\frd,k-1},~l\geq1\}, \quad &
        \rho_{\frd,k}&=2\rho_{\frd,k-1}+\frac{1}{2} = 2^{k-1}-\frac{1}{2}.
    \end{aligned}
    \end{equation}
For each $\frg$ being either $\frh$, $\frr$, or $\frd$,
the minimal order trees satisfying $\frg'(u_{\frg,k})=k$ are $u_{\frg,k}=[\tau_{\frg,k}]_{{f}}$ with $\tau_{\frg,k}\in\Tau_{\frg,k}$, which are of order $\rho(\tau_{\frg,k})$.
\end{proof}

\begin{figure}[t]
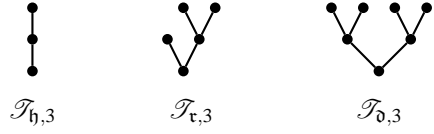

  \centering
  \[
    \begin{array}{ccccc}
       \dtree{\tn}{\dtree{\tn}{\tn}} &\qquad &
       \dtree{\tn}{\tn \dtree{\tn}{\tn \tn}} & \qquad &
       \dtree{\tn}{\dtree{\tn}{\tn \tn} \dtree{\tn}{\tn \tn}} \\[2mm]
       \Tau_{\frh,3} && \Tau_{\frr,3} && \Tau_{\frd,3}
    \end{array}
  \]
  \caption{Minimal order trees with $\frg(\tau)=3$. The sets
    $\Tau_{\frg,3}$ consist of all such trees with only stochastic
    nodes. }
  \label{bsa2:fig:minordtrees}
\end{figure}
Now we can prove the following corollary.
\begin{cor}\thlabel{bsa2:cor:growthfunctions} For $q \geq {\frac12}$ we have
    \[
    \mathcal{G}(q)  =
    \begin{cases}
        {2q} & \text{for simple iterations,} \\
        \lfloor q+\frac12 \rfloor & \text{for modified Newton iterations,} \\
        {\lfloor \log_{2}(q+\frac12) \rfloor +1}& \text{for full Newton iterations.}
    \end{cases}
    \]
\end{cor}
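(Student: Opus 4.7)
The plan is to deduce the corollary directly from \thref{bsa2:lem:GtoOrd}: the implications stated there give an upper bound on $\mathcal{G}(q)$, while the explicit families of minimal order trees $\Tau_{\frg,k}$ built in its proof supply matching witnesses for a lower bound. Since the $T$-form and the $U_f$-form of $\mathcal{G}(q)$ in \eqref{bsa2:eq:Gf} coincide (for any $\tau$ the element $u=[\tau]_f\in U_f$ satisfies $\rho(u)=\rho(\tau)$ and $\frg'(u)=\frg(\tau)$), it is enough to argue on $T$.

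For the upper bound, assume $\rho(\tau)\leq q$ and $\frg(\tau)=k\geq 1$. In the simple iteration case, \thref{bsa2:lem:GtoOrd} gives $k/2\leq q$, hence $k\leq 2q$. For the modified Newton case, $k-\frac12\leq q$ together with $k\in\N$ forces $k\leq\lfloor q+\frac12\rfloor$. For the full Newton case, $2^{k-1}-\frac12\leq q$ rearranges to $2^{k-1}\leq q+\frac12$, and the integrality of $k$ yields $k\leq\lfloor\log_2(q+\frac12)\rfloor+1$. For the matching lower bounds I would exhibit explicit witnesses from \eqref{bsa2:eq:MinOrd}: noting that $q$ is a multiple of $\frac12$ in the applications of interest (so $2q\in\N$), any $\tau\in\Tau_{\frh,2q}$ realizes $\frh(\tau)=2q$ with $\rho(\tau)=q$; any $\tau\in\Tau_{\frr,k}$ for $k=\lfloor q+\frac12\rfloor$ satisfies $\frr(\tau)=k$ and $\rho(\tau)=k-\frac12\leq q$; and for $k=\lfloor\log_2(q+\frac12)\rfloor+1$ the defining inequality $2^{k-1}\leq q+\frac12$ ensures that any $\tau\in\Tau_{\frd,k}$ has $\rho(\tau)=2^{k-1}-\frac12\leq q$ while $\frd(\tau)=k$. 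Combining the two-sided bounds yields the three claimed formulas.

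The proof is essentially a bookkeeping exercise; no genuine obstacle arises, since all the structural work is already contained in \thref{bsa2:lem:GtoOrd}. The only point requiring care is the handling of the floor functions, which comes down to the sharpening of each real inequality $k\leq c(q)$ to an integer inequality $k\leq\lfloor c(q)\rfloor$ and to checking that in the applications $2q$ is indeed an integer so that no floor is needed in the simple case.
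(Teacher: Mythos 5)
Your proposal is correct and takes essentially the same approach as the paper: both derive the upper bound from the implications in \thref{bsa2:lem:GtoOrd} and the matching lower bound from the explicit minimal-order trees $\Tau_{\frg,k}$ constructed in \eqref{bsa2:eq:MinOrd}. The paper phrases this more compactly as a two-sided bracketing $\rho_{\frg,k}\leq q<\rho_{\frg,k+1}\Rightarrow\mathcal{G}(q)=k$, but the content — including your remark that $2q\in\N$ is needed for the first formula to be exact rather than $\lfloor 2q\rfloor$ — is identical.
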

\begin{proof}
The minimal order trees are also the maximum height / ramification number / doubling index trees,
in the sense that as long as $\rho(\tau_{\frg,k}) \leq q <
\rho(\tau_{\frg,k+1})$ there are no trees of order $q$ for
which the growth function can exceed $k$.
\end{proof}

For some methods, these results can be refined. We call a method semi-implicit, if $\Phi_{im}(\tau)\equiv0$ $\forall\tau\notin T_0$ (remember that $T_0$ is the set of trees with a deterministic root). Then, by \thref{bsa2:lem:B_H_iter,bsa2:lem:B_mod,bsa2:lem:B_fullNewton} we obtain the following lemma:
\begin{lem} \thlabel{bsa2:lem:H_iter_semi}
For semi-implicit methods, the corresponding growth functions are given by
   \begin{align*}
   \mathfrak{h_s}(\emptyset)&=0,\quad
   \mathfrak{h_s}([\tau_1,\dots,\tau_\kappa]_l)=
   \begin{cases}
   1&\text{ if }l>0,\\
   1+\max\limits_{j=1}^\kappa\mathfrak{h_s}(\tau_j)&\text{ if }l=0,
   \end{cases}\\
    \mathfrak{r_s}(\emptyset) &=0, \quad  \mathfrak{r_s}(\bullet_{l})=1,
     \quad
    \mathfrak{r_s}(\tau=[\tau_{1},\dots,\tau_{\kappa}]_{l}) =
    \begin{cases}
        1&\text{if} \quad l>0,\\
        \mathfrak{r_s}(\tau_{1}) & \text{if} \quad l=0,\kappa=1,\\
        \displaystyle 1 + \max_{j=1}^{\kappa}\,
        \mathfrak{r_s}(\tau_{j}) & \text{if}
        \quad l=0, \kappa \geq 2,
    \end{cases}\\
    \mathfrak{d_s}(\emptyset) &= 0, \quad  \mathfrak{d_s}(\bullet_{l})=
    1,\quad
    \mathfrak{d_s}(\tau=[\tau_{1},\dots,\tau_{\kappa}]_{l}) =
    \begin{cases}
            1&\text{if}\quad l>0,\\
        \max\limits_{j=1}^{\kappa} \mathfrak{d_s}(\tau_{j}) & \text{if} \quad
        l=0, \gamma  = 1, \\
        \max\limits_{j=1}^{\kappa} \mathfrak{d_s}(\tau_{j}) +1& \text{if} \quad
          l=0,\gamma\geq 2,
    \end{cases}
   \end{align*}
where $\gamma$ is the number of trees in $\tau_1,\dots,\tau_\kappa$ satisfying
$\mathfrak{d_s}(\tau_{i}) = \max_{j=1}^{\kappa} \mathfrak{d_s}(\tau_{j})$.
\end{lem}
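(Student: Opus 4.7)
The three recursion identities for $\Phi_{k+1}$ are immediate restrictions of those in Lemmas~\ref{bsa2:lem:B_H_iter}, \ref{bsa2:lem:B_mod}, and \ref{bsa2:lem:B_fullNewton}: their derivation used only the general composition rule of Theorem~\ref{bsa2:thm:comp} and the product rule of Lemma~\ref{bsa2:lem:Bfprime}, not any specific zero pattern of $\Phi_{im}$. The content of the lemma therefore lies in the explicit form of the three growth functions. My plan is to verify each by induction on $k$, splitting into cases according to the root color of $\tau$. The key semi-implicit observation is that if $\tau\notin T_0$ and $(\vartheta,\omega)\in ST(\tau)$ with $\vartheta\neq\emptyset$, then $\vartheta$ inherits the stochastic root color of $\tau$, so $\vartheta\notin T_0$ and hence $\Phi_{im}(\vartheta)=0$.

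For $\tau\in T_l$ with $l>0$, this observation together with $\Phi_{im}(\emptyset)\equiv 0$ forces $(\phi_x\mlt\Phi_{im})(\tau)=(\phi_x\mltl\Phi_{im})(\tau)=0$ for every choice of $\phi_x$. Substituting into \eqref{bsa2:eq:B_iter_Y}, \eqref{bsa2:eq:lmod}, and \eqref{bsa2:eq:phifullNewton} yields $\Phi_{k+1}(\tau)=\Phi_{ex}(\tau)$ for every $k\geq 0$, and applying the same reasoning to Theorem~\ref{bsa2:thm:Bnum} gives $\Phi(\tau)=\Phi_{ex}(\tau)$. Hence $\Phi_k(\tau)=\Phi(\tau)$ for all $k\geq 1$, consistent with $\mathfrak{h}_s(\tau)=\mathfrak{r}_s(\tau)=\mathfrak{d}_s(\tau)=1$ on every stochastic-rooted tree.

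For $\tau=[\tau_1,\dotsc,\tau_\kappa]_0$, only $(\vartheta,\omega)\in ST(\tau)$ with deterministic-rooted (or empty) $\vartheta$ contribute. The defining recursions of $\mathfrak{h}_s$, $\mathfrak{r}_s$, and $\mathfrak{d}_s$ in the $l=0$ case match those of $\mathfrak{h}$, $\mathfrak{r}$, and $\mathfrak{d}$, so the combinatorial counts (height, ramification number, doubling index along branches through deterministic-rooted subtrees) carry over verbatim from the proofs of Lemmas~\ref{bsa2:lem:B_H_iter}--\ref{bsa2:lem:B_fullNewton}. The induction step reduces to showing, for each contributing $(\vartheta,\omega)$ in the recursion for $\Phi_{k+1}(\tau)$, that every remainder $\delta\in\omega$ satisfies $\mathfrak{g}_s(\delta)\leq k$, so that the inductive hypothesis $\Phi_k(\delta)=\Phi(\delta)$ applies and the difference between the $\Phi_k$-composition and the $\Phi$-composition vanishes; the conclusion $\Phi_{k+1}(\tau)=\Phi(\tau)$ then follows by comparison with Theorem~\ref{bsa2:thm:Bnum}, exactly as in the original proofs.

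The principal technical hurdle is this last structural bound. Because $\mathfrak{g}_s$ truncates to $1$ at any stochastic root, one must verify that no contributing $(\vartheta,\omega)$ extracts a tall deterministic-rooted subtree $\delta$ from beneath a stochastic node of $\tau$. This is handled by a structural induction on $\tau$ using the semi-implicit hypothesis at every level of the recursive $ST$ decomposition: whenever some $\vartheta_i$ sits inside a stochastic-rooted $\tau_i$, that $\vartheta_i$ is itself stochastic-rooted, which — propagated up the decomposition — puts a stochastic node into $\vartheta$ and so forces $\Phi_{im}(\vartheta)=0$ for the semi-implicit methods under consideration. Once this bound is in place, the remainder of each proof mirrors the corresponding argument in the non-semi-implicit case, and for the modified and full Newton schemes the cancellations through the bilinear $\mltl$-operator and through the $\Phi_k^{-1}\mlt\Phi_{k+1}$ factor are checked identically to Lemmas~\ref{bsa2:lem:B_mod} and \ref{bsa2:lem:B_fullNewton}.
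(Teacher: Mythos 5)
The core observation of your proof, that for stochastic-rooted $\tau$ every $(\vartheta,\omega)\in ST(\tau)$ has $\vartheta$ stochastic-rooted or empty so $\Phi_{im}(\vartheta)=0$, hence $\Phi_k(\tau)=\Phi_{ex}(\tau)=\Phi(\tau)$ for all $k\geq 1$, is correct and is indeed what makes the semi-implicit growth functions collapse on stochastic roots. You also correctly isolate the one genuine technical hurdle: showing that no contributing $(\vartheta,\omega)$ extracts a deterministic-rooted remainder $\delta$ with large $\frg_s(\delta)$ from beneath a stochastic node. But your resolution of that hurdle is wrong. You argue that if some $\vartheta_i$ lies inside a stochastic-rooted $\tau_i$, then $\vartheta_i$ is stochastic-rooted, so $\vartheta$ contains a stochastic node, ``and so forces $\Phi_{im}(\vartheta)=0$.'' That last step does not follow from the semi-implicit hypothesis. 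Semi-implicitness as defined in the paper is $\Phi_{im}(\vartheta)\equiv 0$ only for $\vartheta\notin T_0$, i.e.\ for $\vartheta$ with a \emph{stochastic root}; a tree with a deterministic root but stochastic interior or leaf nodes is in $T_0$ and may well carry a nonzero $\Phi_{im}$-weight.

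This is not a hypothetical concern: the semi-implicit schemes SIKP and SIW in Section~\ref{bsa2:sec:numerics} both have $\Phi_{im}([\bullet_1,\bullet_1]_0)\neq 0$ (from the $\tfrac12 g_0''g_1^2(Y_{n+1})$ term). For such a method, taking $\tau=[[[\bullet_0]_0]_1,\bullet_1]_0$ and the decomposition $\vartheta=[\bullet_1,\bullet_1]_0$, $\omega=\{[\bullet_0]_0\}$, one has $\Phi_{im}(\vartheta)\neq 0$ while $\frh_s([\bullet_0]_0)=2=\frh_s(\tau)$, so the structural bound $\frg_s(\delta)<\frg_s(\tau)$ that your induction needs actually fails. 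A correct argument must handle these $\vartheta\in T_0$ that contain stochastic nodes; your proof, as written, simply excludes them under an inapplicable reading of the semi-implicit condition, and this leaves the proof incomplete at precisely the step you flagged as the principal difficulty.
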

This implies immediately:
\begin{lem} \thlabel{bsa2:lem:GtoOrdsemi}
    For $k \geq 1$,
    \[
    \begin{aligned}
        \mathfrak{h_s}(\tau)=k & \quad \Rightarrow \quad \rho(\tau)\geq
        k-\frac12,\\
        \mathfrak{r_s}(\tau)=k & \quad \Rightarrow \quad \rho(\tau)\geq
        \frac32k-1,\\
        \mathfrak{d_s}(\tau)=k & \quad \Rightarrow \quad \rho(\tau)\geq
        \frac342^k-1.
\end{aligned}
\]
The same result is valid for $\mathfrak{h_s}'(u)$, $\mathfrak{r_s}'(u)$, and $\mathfrak{d_s}'(u)$.
\end{lem}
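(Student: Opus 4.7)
The plan is to mirror the proof of \thref{bsa2:lem:GtoOrd}: for each of the three growth functions, identify a family of minimal-order trees $\mathcal{T}_{\mathfrak{g_s},k}$ on which the equality $\mathfrak{g_s}(\tau)=k$ is achieved at the least possible value of $\rho(\tau)$, and then verify the claimed order bound by induction on $k$. The key structural difference from the non-semi-implicit case, which drives all three improved bounds, is that in the semi-implicit setting any tree with a stochastic root $l>0$ has growth function value exactly $1$, so as soon as we wish $\mathfrak{g_s}(\tau)\geq 2$, the root of $\tau$ must be deterministic ($l=0$) and therefore contributes a full $1$ to $\rho$.

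First I would settle the base case $k=1$: in each case $\bullet_l$ with $l\geq 1$ attains $\mathfrak{g_s}=1$ at order $1/2$, matching the three lower bounds $k-\tfrac12$, $\tfrac32 k-1$, and $\tfrac34\cdot 2^k-1$ at $k=1$. Then I would carry out the inductive step case by case. For $\mathfrak{h_s}$, a tree with $\mathfrak{h_s}(\tau)=k\geq 2$ must factor as $[\tau_1,\dots,\tau_\kappa]_0$ with some $\mathfrak{h_s}(\tau_j)=k-1$, giving $\rho(\tau)\geq 1+(k-\tfrac32)=k-\tfrac12$; the minimum is realised by the stack $\mathcal{T}_{\mathfrak{h_s},k}=\{[\tau]_0:\tau\in\mathcal{T}_{\mathfrak{h_s},k-1}\}$. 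For $\mathfrak{r_s}$, a tree with $\mathfrak{r_s}(\tau)=k\geq 2$ again has deterministic root; the recursive rule forces $\kappa\geq 2$ for the growth index to strictly increase, so two siblings contribute, one with $\rho\geq\tfrac32(k-1)-1$ by induction and at least one other with $\rho\geq \tfrac12$, plus the root yielding $\rho(\tau)\geq \tfrac32k-1$. The case $\kappa=1$ is handled by a secondary induction on the number of nodes and produces a strictly stronger bound.

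For $\mathfrak{d_s}$, the main obstacle, I expect the double induction to appear most prominently. If $\mathfrak{d_s}(\tau)=k\geq 2$ with $\gamma\geq 2$ at the root, two siblings have $\mathfrak{d_s}=k-1$, and the inductive hypothesis in $k$ yields
\[\rho(\tau)\geq 1+2\bigl(\tfrac34\cdot 2^{k-1}-1\bigr)=\tfrac34\cdot 2^k-1,\]
which is sharp on $\mathcal{T}_{\mathfrak{d_s},k}=\{[\tau_1,\tau_2]_0:\tau_1,\tau_2\in\mathcal{T}_{\mathfrak{d_s},k-1}\}$, and solving the recursion $\rho_{\mathfrak{d_s},k}=2\rho_{\mathfrak{d_s},k-1}+1$ with $\rho_{\mathfrak{d_s},1}=\tfrac12$ gives precisely $\tfrac34\cdot 2^k-1$. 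The awkward subcase is $\gamma=1$: then some subtree $\tau_i$ already attains $\mathfrak{d_s}(\tau_i)=k$, and since $\mathfrak{d_s}$ does not decrease I cannot close the induction on $k$ alone. This is resolved by running a secondary induction on the number of nodes of $\tau$: $\tau_i$ has strictly fewer nodes, so $\rho(\tau_i)\geq \tfrac34\cdot 2^k-1$ and $\rho(\tau)\geq 1+\rho(\tau_i)$ is strictly stronger than needed.

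Finally, the claims for $\mathfrak{h_s}'$, $\mathfrak{r_s}'$ and $\mathfrak{d_s}'$ on $U_f$ follow at once, because $u=[\tau_1,\dots,\tau_\kappa]_f$ satisfies $\rho(u)=\sum_{j=1}^\kappa\rho(\tau_j)\geq\rho(\tau_{j_0})$ for the index $j_0$ achieving $\mathfrak{g_s}'(u)=\mathfrak{g_s}(\tau_{j_0})$, so the tree bound transfers verbatim.
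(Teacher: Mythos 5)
Your proof is correct, and it fleshes out exactly the argument the paper leaves implicit: the paper states \thref{bsa2:lem:GtoOrdsemi} follows ``immediately'' from \thref{bsa2:lem:H_iter_semi} by the same minimal-tree analysis as in \thref{bsa2:lem:GtoOrd}, and your identification of the minimal families (chains/two-branch combs/binary doublings with deterministic roots above a stochastic leaf layer), your order recursions, and your secondary induction to dispose of the $\kappa=1$ (resp.\ $\gamma=1$) cases are all what that ``immediately'' silently invokes.
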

\begin{cor}
For semi-implicit methods we have for $q \geq {\frac12}$
    \[
    \mathcal{G}(q)=
        \begin{cases}
                \lfloor q+\frac12 \rfloor & \text{for simple iterations,} \\
        \lfloor \frac23(q+1) \rfloor & \text{for modified Newton iterations,} \\
        {\lfloor \log_{2}\frac{q+1}3 \rfloor+2 }& \text{for full Newton iterations.}
    \end{cases}
\]
\end{cor}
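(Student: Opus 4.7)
The plan is to follow the template of the proof of \thref{bsa2:cor:growthfunctions}. That proof proceeded by identifying, for each growth function $\mathfrak{g}\in\{\mathfrak{h},\mathfrak{r},\mathfrak{d}\}$, a family $\Tau_{\mathfrak{g},k}$ of minimal-order trees realising $\mathfrak{g}(\tau)=k$, and then remarking that these same trees simultaneously maximise the growth function among trees of any fixed order, so that $\mathcal{G}(q)=k$ precisely on the range $\rho_{\mathfrak{g},k}\le q<\rho_{\mathfrak{g},k+1}$. Exactly the same scheme will work here for the semi-implicit growth functions $\mathfrak{h_s}$, $\mathfrak{r_s}$, $\mathfrak{d_s}$ from \thref{bsa2:lem:H_iter_semi}.

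First I would construct the minimal-order families $\Tau_{\mathfrak{g_s},k}$ by induction on $k$, reading off the recursions of \thref{bsa2:lem:H_iter_semi}. In every case the growth function is bumped only at a deterministic root (and, for $\mathfrak{r_s}$ and $\mathfrak{d_s}$, only in a prescribed branching pattern), so the cheapest way to push $\mathfrak{g_s}$ from $k-1$ to $k$ is: prepending a single deterministic root with one child for $\mathfrak{h_s}$; prepending a deterministic root with an additional stochastic leaf as second child for $\mathfrak{r_s}$; and joining two minimal trees at a new deterministic root for $\mathfrak{d_s}$. The resulting orders are $\rho_{\mathfrak{h_s},k}=k-\tfrac12$, $\rho_{\mathfrak{r_s},k}=\tfrac32 k-1$ and $\rho_{\mathfrak{d_s},k}=\tfrac34\cdot 2^k-1$, showing that the lower bounds of \thref{bsa2:lem:GtoOrdsemi} are sharp. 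The analogous statements for $\mathfrak{g_s}'(u)$ follow, as in the proof of \thref{bsa2:lem:GtoOrd}, by taking $u=[\tau]_f$ with $\tau\in\Tau_{\mathfrak{g_s},k}$.

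Next I would invert the three order formulas. Using $\mathcal{G}(q)=\max\{k\ge 1:\rho_{\mathfrak{g_s},k}\le q\}$ together with the explicit $\rho_{\mathfrak{g_s},k}$ gives $\mathcal{G}(q)=\lfloor q+\tfrac12\rfloor$ for simple and $\mathcal{G}(q)=\lfloor\tfrac23(q+1)\rfloor$ for modified Newton iterations at once. In the full Newton case, the rearrangement $3\cdot 2^{k-2}-1\le q\Leftrightarrow 2^k\le \tfrac{4(q+1)}{3}$ yields $k\le\log_2\tfrac{q+1}{3}+2$, and hence $\mathcal{G}(q)=\lfloor\log_2\tfrac{q+1}{3}\rfloor+2$.

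The main obstacle is the first step, namely verifying that the families $\Tau_{\mathfrak{g_s},k}$ really are of minimal order: one has to check that \emph{every} tree with $\mathfrak{g_s}(\tau)=k$ satisfies $\rho(\tau)\ge \rho_{\mathfrak{g_s},k}$. This reduces to a routine induction on the tree structure using the recursions of \thref{bsa2:lem:H_iter_semi}. The only subtle point is that a stochastic root contributes just $\tfrac12$ to $\rho(\tau)$ yet immediately pins $\mathfrak{g_s}(\tau)$ to $1$ regardless of the subtree underneath it, so one cannot shorten a minimal-order tree by exchanging an inner deterministic root for a stochastic one without destroying the required growth function value.
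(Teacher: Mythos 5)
Your proposal is correct and takes essentially the same approach as the paper: the paper presents this corollary without a separate proof, relying on \thref{bsa2:lem:GtoOrdsemi} and the same scheme used to prove \thref{bsa2:cor:growthfunctions} (minimal-order families $\Tau_{\mathfrak{g_s},k}$ with $\rho_{\mathfrak{h_s},k}=k-\tfrac12$, $\rho_{\mathfrak{r_s},k}=\tfrac32k-1$, $\rho_{\mathfrak{d_s},k}=\tfrac34\cdot2^k-1$, then inverting). Your verification of the recursions and the inversion step, including pulling the integer $2$ out of the floor in the full Newton case, is exactly what the paper leaves implicit.
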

For the trivial predictor, Table \ref{bsa2:table1} gives the number of iterations needed to achieve a certain order of convergence, both in the general and in the semi-implicit case.
\begin{table}
\begin{center}
\begin{tabular}{c||c|c|c}
$p$&simple iter.&mod.\ iter.&full iter.\\\hline
$\frac12$&2 (1)&1&1\\
1&2 (1)&1&1\\
$1\frac12$&4 (2)&2 &2\\
2&4 (2)&2&2\\
$2\frac12$&6 (3)&3 (2)&2\\
3&6 (3)&3 (2)&2
\end{tabular}
\end{center}
\caption{\label{bsa2:table1}Number of iterations needed to achieve order $p$ when using the simple, modified or full Newton iteration scheme in the It\^{o} and Stratonovich case for strong or weak approximation, provided \eqref{bsa2:eq:iteraddcond} is fulfilled. In parentheses, the numbers for semi-implicit methods are given.}
\end{table}

For the sake of completeness, we also give the corresponding results for (deterministic) Taylor methods applied to deterministic problems. Note that in this case, \eqref{bsa2:eq:iteraddcond} is automatically fulfilled.
\begin{lem} \thlabel{bsa2:lem:GtoOrddet}
Suppose that the considered problem is purely deterministic, i.\,e.\ $m=0$ in \eqref{bsa2:SDE}. Then, for $k \geq 1$,
    \[
    \begin{aligned}
        \mathfrak{h}(\tau)=k & \quad \Rightarrow \quad \rho(\tau)\geq
        k,\\
        \mathfrak{r}(\tau)=k & \quad \Rightarrow \quad \rho(\tau)\geq
        2k-1,\\
        \mathfrak{d}(\tau)=k & \quad \Rightarrow \quad \rho(\tau)\geq
        2^k-1.
\end{aligned}
\]
The same result is valid for $\mathfrak{h}'(u)$, $\mathfrak{r}'(u)$, and $\mathfrak{d_s}'(u)$.
\end{lem}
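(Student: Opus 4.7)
The plan is to mirror the argument of \thref{bsa2:lem:GtoOrd}, adapted to the purely deterministic setting. Since $m=0$, every node in every tree has color $0$ and therefore contributes $1$ (rather than $\frac12$) to $\rho$. For each growth function $\frg \in \{\frh,\frr,\frd\}$ I would identify, by induction on $k$, a set $\Tau_{\frg,k}$ of trees realizing $\frg(\tau)=k$ with minimal order $\rho_{\frg,k}$, and show that $\rho_{\frg,k}$ matches the bound claimed in the lemma.

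For the base case $k=1$, the only tree (other than $\emptyset$) for which each growth function equals $1$ is $\bullet_0$, with $\rho(\bullet_0)=1$, giving $\rho_{\frh,1}=1=k$, $\rho_{\frr,1}=1=2k-1$, and $\rho_{\frd,1}=1=2^k-1$. For the inductive step, the recursive definitions of $\frh,\frr,\frd$ in \thref{bsa2:lem:B_H_iter,bsa2:lem:B_mod,bsa2:lem:B_fullNewton} dictate the cheapest way to increase the growth function by one unit. Concretely, one shows
\begin{align*}
\Tau_{\frh,k} &= \{[\tau]_{0} : \tau \in \Tau_{\frh,k-1}\}, & \rho_{\frh,k} &= \rho_{\frh,k-1}+1 = k,\\
\Tau_{\frr,k} &= \{[\bullet_{0},\tau]_{0} : \tau \in \Tau_{\frr,k-1}\}, & \rho_{\frr,k} &= \rho_{\frr,k-1}+2 = 2k-1,\\
\Tau_{\frd,k} &= \{[\tau_1,\tau_2]_{0} : \tau_1,\tau_2 \in \Tau_{\frd,k-1}\}, & \rho_{\frd,k} &= 2\rho_{\frd,k-1}+1 = 2^{k}-1,
\end{align*}
each identity following exactly as in \eqref{bsa2:eq:MinOrd} but with the extra $+1$ from the deterministic root in place of $+\frac12$.

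The only non-routine point is verifying minimality, i.e.\ that no tree of order strictly less than the claimed bound can attain $\frg(\tau)=k$. For $\frh$ this follows because any tree with $\frh(\tau)=k$ contains a branch of $k$ nodes, contributing at least $k$ to $\rho$. For $\frr$, any tree with $\frr(\tau)=k$ must contain, by the recursive definition, a chain of $k-1$ ramifications, each of which forces at least two children at that level, so the minimal count of nodes is $2k-1$. For $\frd$, an easy induction on $k$ using the definition of $\gamma \geq 2$ shows that any $\tau$ with $\frd(\tau)=k$ must contain at least two disjoint subtrees of doubling index $k-1$ joined at a common root, yielding at least $2\cdot(2^{k-1}-1)+1 = 2^k-1$ nodes. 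This minimality check is the main technical obstacle and is where the deterministic case diverges quantitatively from the stochastic one.

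Finally, the extension to the primed growth functions on $U_f$ is identical to the argument at the end of the proof of \thref{bsa2:lem:GtoOrd}: the minimal-order $u \in U_f$ with $\frg'(u)=k$ is $u_{\frg,k}=[\tau_{\frg,k}]_{f}$ for $\tau_{\frg,k}\in\Tau_{\frg,k}$, whose order equals $\rho(\tau_{\frg,k})$, so the same lower bounds transfer verbatim.
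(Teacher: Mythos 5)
Your proposal is correct and follows essentially the same route the paper implicitly intends: the paper states \thref{bsa2:lem:GtoOrddet} without proof as a direct adaptation of the proof of \thref{bsa2:lem:GtoOrd}, replacing the stochastic-root contribution $\tfrac12$ by $1$ throughout \eqref{bsa2:eq:MinOrd}, and your recursions $\rho_{\frh,k}=k$, $\rho_{\frr,k}=2k-1$, $\rho_{\frd,k}=2^k-1$ match exactly. You are in fact somewhat more careful than the paper in spelling out the minimality verification (particularly for $\frd$, where the relevant vertex with $\gamma\geq 2$ need not be the root of $\tau$, a point your phrasing glosses over slightly but whose numerical conclusion is nevertheless right).
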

\begin{cor}
For deterministic problems, we have for $q\in\N$, $q\geq1$
    \[
    \mathcal{G}(q)=
        \begin{cases}
                q & \text{for simple iterations,} \\
        \lfloor \frac{q+1}2 \rfloor & \text{for modified Newton iterations,} \\
        {\lfloor \log_{2}(q+1) \rfloor}& \text{for full Newton iterations.}
    \end{cases}
\]
\end{cor}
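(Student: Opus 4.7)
The corollary is the deterministic analogue of Corollary \ref{bsa2:cor:growthfunctions}, so my plan is to mirror the proof of that earlier corollary, using Lemma \ref{bsa2:lem:GtoOrddet} in place of Lemma \ref{bsa2:lem:GtoOrd}.

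The first step establishes the upper bounds on $\mathcal{G}(q)$. By Lemma \ref{bsa2:lem:GtoOrddet}, if $\mathfrak{h}(\tau) = k$ then $\rho(\tau) \geq k$, so $\rho(\tau) \leq q$ forces $\mathfrak{h}(\tau) \leq q$, and hence $\mathcal{G}(q) \leq q$ in the simple iteration case. Similarly, $\mathfrak{r}(\tau) = k$ implies $\rho(\tau) \geq 2k-1$, so $\rho(\tau) \leq q$ yields $k \leq (q+1)/2$ and thus $\mathcal{G}(q) \leq \lfloor (q+1)/2 \rfloor$ for modified Newton iterations. Finally, $\mathfrak{d}(\tau) = k$ implies $\rho(\tau) \geq 2^k - 1$, so $\rho(\tau) \leq q$ gives $2^k \leq q+1$, that is $\mathcal{G}(q) \leq \lfloor \log_2(q+1) \rfloor$. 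The same reasoning applies to trees $u \in U_f$ via the analogous bounds on $\mathfrak{h}'$, $\mathfrak{r}'$, $\mathfrak{d}'$.

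The second step is sharpness: for each iteration type I would exhibit an explicit family of deterministic trees that realize the upper bound. Following the construction underlying Lemma \ref{bsa2:lem:GtoOrddet} (analogous to \eqref{bsa2:eq:MinOrd} but now with $l = 0$ at every vertex since $m = 0$), take the $k$-vertex chain $[[\cdots[\bullet_0]_0\cdots]_0]_0 \in T_0$ with $\mathfrak{h} = k$ and $\rho = k$ for the simple case; the tree defined recursively by $[\bullet_0, \tau_{k-1}]_0$ with $\mathfrak{r} = k$ and $\rho = 2k-1$ for the modified case; and the balanced binary tree defined recursively by $[\tau_{k-1}, \tau_{k-1}]_0$ with $\mathfrak{d} = k$ and $\rho = 2^k - 1$ for the full Newton case. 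These witness that $\mathcal{G}(q)$ attains the claimed values whenever $q$ is large enough to admit them, which is precisely the statement $\mathcal{G}(q) = q$, $\lfloor (q+1)/2 \rfloor$, $\lfloor \log_2(q+1) \rfloor$ respectively for $q \in \mathbb{N}$, $q \geq 1$.

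I do not expect a genuine obstacle here: everything reduces to the inequalities of Lemma \ref{bsa2:lem:GtoOrddet} together with the explicit extremal trees. The only small point to be careful about is that, since we are in the deterministic case where every node contributes $1$ to $\rho$, one must verify by induction on $k$ that the recursively constructed extremal trees indeed have the stated orders, and that the maximization in \eqref{bsa2:eq:Gf} is attained on $T$ (as opposed to $U_f$) for these trees — but the lifting to $U_f$ via $u = [\tau]_f$ adds no order and preserves the growth function value, so the two maxima coincide, exactly as in the proof of Lemma \ref{bsa2:lem:GtoOrd}.
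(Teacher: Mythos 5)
Your proposal is correct and follows essentially the same line as the paper's proof of the analogous Corollary~\ref{bsa2:cor:growthfunctions}: upper bounds from the order lower bounds in Lemma~\ref{bsa2:lem:GtoOrddet}, and sharpness because the minimal-order trees realizing each growth function value (now with all nodes colored $0$, so every node contributes $1$ to $\rho$) are themselves the extremal trees, giving $\rho = k$, $2k-1$, and $2^k-1$ for $\frh$, $\frr$, $\frd$ respectively. The paper states the deterministic corollary without a separate proof precisely because it is this direct transcription, and your arithmetic checking that the extremal tree of growth value $\lfloor(q+1)/2\rfloor$ (resp.\ $\lfloor\log_2(q+1)\rfloor$) indeed has order $\leq q$ closes the argument.
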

\section{Numerical examples}\label{bsa2:sec:numerics}
In the following, we analyze numerically the order of convergence of several stochastic Taylor methods in dependence on the kind and number of iterations.

As first examples, we apply the semi-implicit Milstein method \cite{kloeden99nso}, denoted by SIM and given by \eqref{bsa2:eq:sim},
the implicit Milstein-Taylor method \cite{tian01itm}, denoted by IM and given by
\begin{equation*}
Y_{n+1}=Y_{n}+hg_0(Y_{n+1})+I_{(1)}g_1(Y_{n+1})-(I_{(1,1)}+h)[g_1'g_1](Y_{n+1}),
\end{equation*}
both of strong order 1.0,
and the semi-implicit strong order 1.5 Taylor method due to Kloeden and Platen \cite{kloeden99nso,tian01itm}, denoted by SIKP and given by
\begin{align*}
Y_{n+1}=&Y_{n}+hg_0(Y_{n+1})+I_{(1)}g_1(Y_{n})+I_{(1,1)}[g_1'g_1](Y_{n})
-I_{(0,1)}[g_0'g_1](Y_{n})
\\&-\frac12h^2[g_0'g_0+\frac12g_0''g_1^2](Y_{n+1})
+I_{(0,1)}[g_1'g_0+\frac12g_1''g_1^2](Y_{n})\\
&+I_{(1,1,1)}[g_1'^2g_1+g_1''g_1^2](Y_{n}),
\end{align*}
to the non-linear SDE \cite{kloeden99nso}
\begin{equation} \label{bsa2:eq:nonlinex}
    dX(t) = \left( \tfrac{1}{2} X(t) + \sqrt{X(t)^2 + 1} \right) \,
    dt + \sqrt{X(t)^2 + 1} \, dW(t), \qquad X(0)=0,
\end{equation}
on the time interval $I=[0,1]$ with the solution $X(t) = \sinh (t + W(t))$. With each method, the solution is approximated with step sizes $2^{-11}, \ldots, 2^{-15}$ and the sample average of $M=4000$ independent simulated realisations of the absolute error is calculated in order to estimate the expectation.

The results at time $t=1$ are presented in Figure \ref{bsa2:fig:3}, where the orders of convergence correspond to the slope of the regression lines. As predicted by Table \ref{bsa2:table1} we observe strong order 1.0 for one simple or one (modified) Newton iteration of the semi-implicit Milstein method; and no convergence for one simple iteration but strong order 1.0 for two simple or one (modified) Newton iteration of the implicit Milstein-Taylor method. The semi-implicit strong order 1.5 Taylor method yields strong order 1.0 for one and strong order 1.5 for two simple or modified Newton iterations.
\begin{figure}[tbp]
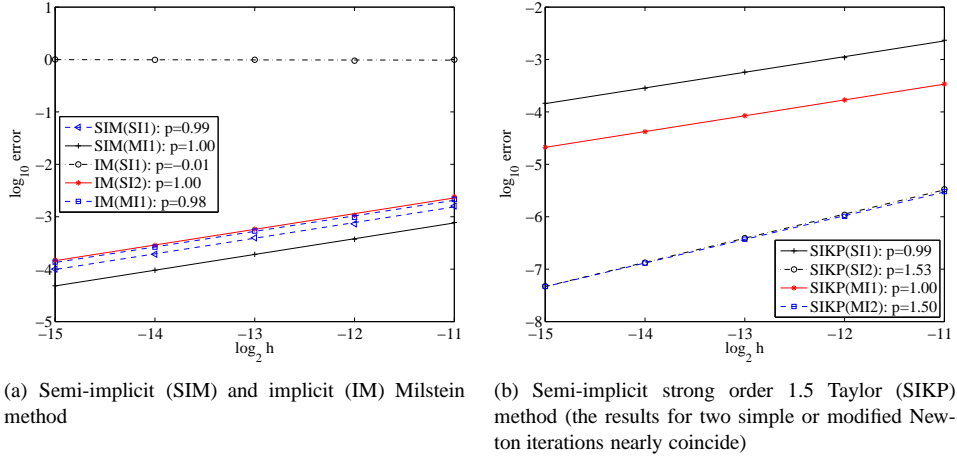

\subfigure[Semi-implicit (SIM) and implicit (IM) Milstein method]{\includegraphics[width=\figwidth]{\Farbbild{Bsp53SIMIM_3}}}
\hspace*{\fill}
\subfigure[Semi-implicit strong order 1.5 Taylor (SIKP) method (the results for two simple or modified Newton iterations nearly coincide)]{\includegraphics[width=\figwidth]{\Farbbild{Bsp53SIKP_3}}}
\caption{\label{bsa2:fig:3}Error of several Taylor methods applied to  \eqref{bsa2:eq:nonlinex} with up to two simple (SI) and modified Newton (MI) iterations}
\end{figure}

Next, we apply the semi-implicit weak order two Taylor scheme due to Platen \cite{kloeden99nso},
 denoted by SIW and given by
\begin{align*}
Y_{n+1}=&Y_{n}+hg_0(Y_{n+1})+I_{(1)}g_1(Y_{n})+I_{(1,1)}[g_1'g_1](Y_{n})
\\&+\frac12I_{(1)}h[-g_0'g_1+g_1'g_0+\frac12g_1''g_1^2](Y_{n})-\frac12h^2[g_0'g_0+\frac12g_0''g_1^2](Y_{n+1}),
\end{align*}
to SDE \eqref{bsa2:eq:nonlinex}. Here, we choose as functional $f(x)=p(\arsinh(x))$, where $p(z) = z^3 - 6z^2 + 8z$ is a polynomial. Then the expectation of the solution can be calculated as
\begin{equation}
     \E(f(X(t))) = t^3 - 3t^2 + 2t \,\,.
\end{equation}
The solution $\E(f(X(t)))$ is approximated with step sizes $2^{-3}, \ldots, 2^{-6}$ and $M=4\cdot10^9$ simulations are performed in order to determine the systematic error of SIW at time $t=1$.
The results with one or two simple or modified Newton iteration steps are presented in Figure \ref{bsa2:fig:1}. According to Table \ref{bsa2:table1} we expect approximation order one for one iteration and order two for two iterations, which is approved by Figure \ref{bsa2:fig:1}.
 \begin{figure}[tbp]
 \begin{center}
 \includegraphics[width=\figwidth]{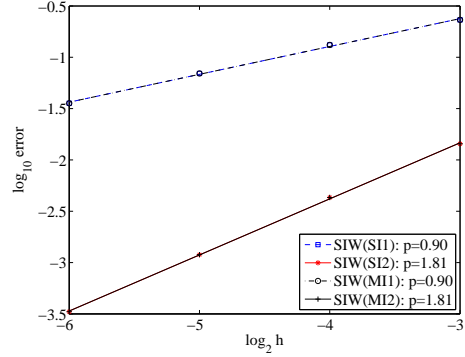}
 \end{center}
 \caption{\label{bsa2:fig:1}Error of the weak second order (semi-) implicit Platen method applied to \eqref{bsa2:eq:nonlinex} with one or two simple (SI) or modified Newton (MI) iterations (the results for one respectively two simple and modified Newton iterations coincide)}
 \end{figure}
Finally, we apply the fully implicit strong order 1.5 Taylor scheme given in \cite{debrabantXXcos},
\begin{align*}
Y_{n+1}=&Y_n+\frac12\I1g_{1,n+1}+\frac12hg_{0,n+1}+\frac12(\I{1,1}+h)g_{1,n+1}'g_{1,n+1}
+\frac14{h^2}g_{0,n+1}'g_{0,n+1}
\\&+\frac18{h^2}g_{0,n+1}''(g_{1,n+1},g_{1,n+1})
+\frac12\I1\g+\frac12h\f
-\left(h+\frac12\I{1,1}\right)\g'\g
\\&+\frac12\left(\I{0,1}-\I{1,0}\right)\g'\f
-\frac12(\I{0,1}-\I{1,0})\f'\g
\\&+\left(\frac12\I{0,1}-\frac74h\I1
-2\I{1,1,1}\right)\g''(\g,\g)
-\left(\frac32h\I1+2\I{1,1,1}\right)\g'\g'\g
\\&-\frac14{h^2}\f'\f
-h^2\g''(\f,\g)-\frac14{h^2}\g'\f'\g
-\frac34h^2\g'\g'\f
-\frac18h^2\f''(\g,\g)
\\&-\frac14{h^2}\g'\g'\g'\g
-\frac58h^2\g'\g''(\g,\g)
-\frac74h^2\g''(\g'g,\g)
-\frac34h^2\g'''(\g,\g,\g)
\end{align*}
(here we used the abbreviations $g_{l,n+1}=g_l(Y_{n+1})$ and $g_{l}=g_l(Y_{n})$),
which is denoted by FIT, and the semi-implicit strong order 1.5 scheme SIKP to the system of non-linear SDEs
\begin{equation}
\label{bsa2:eq:nonlinex2d}
\begin{split}
dX_1(t)&=\left(\frac12X_1(t)+\sqrt{X_1(t)^2+X_2(t)^2+1}\right)~dt+\big(\sin(X_1(t))+2\sin(X_2(t))\big)~dW(t),\\
dX_2(t)&=\left(\frac12X_1(t)+\sqrt{X_2(t)^2+1}\right)~dt+\big(\cos(X_1(t))+3\cos(X_2(t))\big)~dW(t),\\
X_1(0)&=0,\quad X_2(0)=0,
\end{split}
\end{equation}
again on the time interval $I=[0,1]$. The solution is approximated with step sizes $2^{-11}, \ldots, 2^{-15}$ and the sample average of $M=4000$ independent simulated realisations of the absolute error is calculated in order to estimate the expectation. As here we do not know the exact solution, to approximate it we use SIKP with two simple iterations and a step size ten times smaller than the actual step size.

The numerical results at $t=1$ are presented in Figure \ref{bsa2:fig:2}. Again, the orders expected according to Table \ref{bsa2:table1} are confirmed. \begin{figure}[tbp]
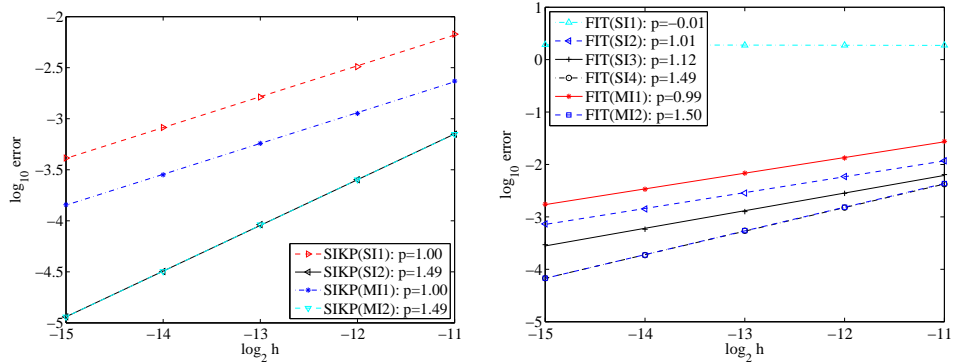

\subfigure[Semi-implicit strong order 1.5 Taylor (SIKP) method (the results for two simple or modified Newton iterations coincide)]{\includegraphics[width=\figwidth]{\Farbbild{Bsp64SIKP_3}}}
\hspace*{\fill}
\subfigure[Full implicit strong order 1.5 Taylor (FIT) method (the results for four simple or two modified Newton iterations nearly coincide)]{\includegraphics[width=\figwidth]{\Farbbild{Bsp64FIT_3}}}
 \caption{\label{bsa2:fig:2}Error of SIKP and FIT applied to \eqref{bsa2:eq:nonlinex2d} with different numbers of simple (SI) and modified Newton (MI) iterations}
 \end{figure}

\section{Conclusion}
For stochastic implicit Taylor methods that use an iterative scheme to approximate the solution, we derived stochastic B--series and corresponding growth functions. From these, we deduced convergence results based on the order of the underlying Taylor method, the choice of the iteration method, the predictor, and the number of iterations, for It\^{o} and Stratonovich SDEs, and for weak as well as strong convergence. The convergence results are confirmed by numerical experiments. From a practical point of view, this theory might lead to the construction of more efficient numerical schemes for SDEs. But we also like to point out that the similarities of the iteration dependent growth functions $\frg$ for a range of problems (ODEs, DAEs, and SDEs) and underlying methods (Runge--Kutta methods, implicit Taylor methods) indicate an underlying structure that could well be investigated in a more general fashion. In spite of this, the number of iterations needed to obtain the order of the underlying implicit Taylor method does not depend on whether the SDE is of It\^{o} or Stratonovich type. This is in contrast to the results obtained for Runge--Kutta methods for SDEs, for which usually less iterations are needed in the Stratonovich case \cite{debrabant08bao}. The reason for this is that in the latter case certain error terms have vanishing expectation even if they do not vanish themselves.  This is not the situation for implicit Taylor methods.

\section{Acknowledgement} {We thank Professor Martin Arnold and Professor R\"{u}diger Weiner at the Arbeitsgruppe Numerische Mathematik, Martin-Luther-Universit\"{a}t Halle-Wittenberg, for their kind support during the final stage of this work.} 

\def\cprime{$'$}


\begin{thebibliography}{10}
\providecommand{\url}[1]{{#1}}
\providecommand{\urlprefix}{URL }
\expandafter\ifx\csname urlstyle\endcsname\relax
  \providecommand{\doi}[1]{DOI~\discretionary{}{}{}#1}\else
  \providecommand{\doi}{DOI~\discretionary{}{}{}\begingroup
  \urlstyle{rm}\Url}\fi

\bibitem{barrio05pot}
Barrio, R.: Performance of the {T}aylor series method for {ODE}s/{DAE}s.
\newblock Appl. Math. Comput. \textbf{163}(2), 525--545 (2005).
\newblock \doi{10.1016/j.amc.2004.02.015}

\bibitem{burrage96hso}
Burrage, K., Burrage, P.M.: High strong order explicit {R}unge--{K}utta methods
  for stochastic ordinary differential equations.
\newblock Appl. Numer. Math. \textbf{22}(1-3), 81--101 (1996).
\newblock \doi{10.1016/S0168-9274(96)00027-X}.
\newblock Special issue celebrating the centenary of Runge-Kutta methods

\bibitem{burrage00oco}
Burrage, K., Burrage, P.M.: Order conditions of stochastic {R}unge--{K}utta
  methods by {$B$}-series.
\newblock SIAM J. Numer. Anal. \textbf{38}(5), 1626--1646 (electronic) (2000).
\newblock \doi{10.1137/S0036142999363206}

\bibitem{burrage04isr}
Burrage, K., Tian, T.: Implicit stochastic {R}unge--{K}utta methods for
  stochastic differential equations.
\newblock BIT \textbf{44}(1), 21--39 (2004).
\newblock \doi{10.1023/B:BITN.0000025089.50729.0f}

\bibitem{burrage99rkm}
Burrage, P.M.: {R}unge--{K}utta methods for stochastic differential equations.
\newblock Ph.D. thesis, The University of {Q}ueensland, Brisbane (1999)

\bibitem{butcher72aat}
Butcher, J.: {An algebraic theory of integration methods.}
\newblock Math. Comput. \textbf{26}, 79--106 (1972).
\newblock \doi{10.2307/2004720}

\bibitem{debrabant08bao}
Debrabant, K., Kv{\ae}rn{\o}, A.: B-series analysis of stochastic
  {R}unge-{K}utta methods that use an iterative scheme to compute their
  internal stage values.
\newblock SIAM J. Numer. Anal. \textbf{47}(1), 181--203 (2008/09).
\newblock \doi{10.1137/070704307}

\bibitem{debrabantXXcos}
Debrabant, K., Kv{\ae}rn{\o}, A.: Composition of stochastic {B}-series with
  applications to implicit {T}aylor methods.
\newblock Preprint Numerics 1/2010, Norwegian University of Science and
  Technology, Trondheim (2010).
\newblock
  \urlprefix\url{http://www.math.ntnu.no/preprint/numerics/2010/N1-2010.pdf}

\bibitem{debrabant10ste}
Debrabant, K., Kv{\ae}rn{\o}, A.: Stochastic {T}aylor expansions: Weight
  functions of {B}-series expressed as multiple integrals.
\newblock Stoch. Anal. Appl. \textbf{28}(2), 293 -- 302 (2010).
\newblock \doi{10.1080/07362990903546504}

\bibitem{hairer02gni}
Hairer, E., Lubich, C., Wanner, G.: Geometric numerical integration,
  \emph{Springer Series in Computational Mathematics}, vol.~31.
\newblock Springer-Verlag, Berlin (2002)

\bibitem{hairer74otb}
Hairer, E., Wanner, G.: On the {B}utcher group and general multi-value methods.
\newblock Computing (Arch. Elektron. Rechnen) \textbf{13}(1), 1--15 (1974)

\bibitem{higham00msa}
Higham, D.J.: Mean-square and asymptotic stability of the stochastic theta
  method.
\newblock SIAM J. Numer. Anal. \textbf{38}(3), 753--769 (electronic) (2000).
\newblock \doi{10.1137/S003614299834736X}

\bibitem{jackson94tuo}
Jackson, K.R., Kv{\ae}rn{\o}, A., N{\o}rsett, S.P.: The use of {B}utcher series
  in the analysis of {N}ewton-like iterations in {R}unge--{K}utta formulas.
\newblock Appl. Numer. Math. \textbf{15}(3), 341--356 (1994).
\newblock \doi{10.1016/0168-9274(94)00031-X}.
\newblock International Conference on Scientific Computation and Differential
  Equations (Auckland, 1993)

\bibitem{jackson96aao}
Jackson, K.R., Kv{\ae}rn{\o}, A., N{\o}rsett, S.P.: An analysis of the order of
  {R}unge--{K}utta methods that use an iterative scheme to compute their
  internal stage values.
\newblock BIT \textbf{36}(4), 713--765 (1996).
\newblock \doi{10.1007/BF01733789}

\bibitem{kloeden99nso}
Kloeden, P.E., Platen, E.: Numerical solution of stochastic differential
  equations, \emph{Applications of Mathematics}, vol.~21, 2 edn.
\newblock Springer-Verlag, Berlin (1999)

\bibitem{roessler06rta}
R\"{o}{\ss}ler, A.: Rooted tree analysis for order conditions of stochastic
  {R}unge--{K}utta methods for the weak approximation of stochastic
  differential equations.
\newblock Stoch. Anal. Appl. \textbf{24}(1), 97--134 (2006).
\newblock \doi{10.1080/07362990500397699}

\bibitem{tian01itm}
Tian, T., Burrage, K.: Implicit {T}aylor methods for stiff stochastic
  differential equations.
\newblock Appl. Numer. Math. \textbf{38}(1-2), 167--185 (2001).
\newblock \doi{10.1016/S0168-9274(01)00034-4}

\end{thebibliography}
\end{document}